\newcommand{\cmark}{\ding{51}}%
\newcommand\xrowht[2][0]{\addstackgap[.5\dimexpr#2\relax]{\vphantom{#1}}}
\newcommand{\R}{\mathbb{R}}
\newcommand{\C}{\mathbb{C}}
\newcommand{\upperset}[2]{\underset{\text{\raisebox{1ex}{\smash{\fontsize{5}{5}$#1$}}}}{#2}}
\newcommand{\delb}{\overline\partial}
\theoremstyle{plain}
\newtheorem{theorem}{Theorem}[section]
\newtheorem{lemma}[theorem]{Lemma}
\newtheorem{proposition}[theorem]{Proposition}
\theoremstyle{definition}
\newtheorem{definition}[theorem]{Definition}
\newtheorem{example}[theorem]{Example}
\newtheorem{remark}[theorem]{Remark}
\begin{document}

\title{Generalized K\"ahler almost abelian Lie groups}

\author{Anna Fino}
\address[A. Fino]{Dipartimento di Matematica\\
 Universit\`a di Torino\\
Via Carlo Alberto 10\\
10123 Torino, Italy} \email{annamaria.fino@unito.it}

\author{Fabio Paradiso}
\address[F. Paradiso]{Dipartimento di Matematica\\
 Universit\`a di Torino\\
Via Carlo Alberto 10\\
10123 Torino, Italy} \email{fabio.paradiso@unito.it}

\subjclass[2010]{53D18, 53C15, 53C30, 53C55}
\keywords{Almost abelian Lie groups, Hermitian metrics, Generalized K\"ahler structures, Holomorphic Poisson structures}

\begin{abstract}
We study  left-invariant generalized K\"ahler structures on almost abelian Lie groups, i.e.,  on  solvable Lie groups with a codimension-one abelian normal subgroup.   In particular, we  classify  six-dimensional   almost abelian Lie groups which admit a left-invariant complex structure    and establish which of those have a left-invariant Hermitian  structure whose  fundamental 2-form is $\partial \overline \partial$-closed.  We obtain a classification of six-dimensional generalized K\"ahler almost abelian Lie groups and    determine   the six-dimensional  compact  almost abelian solvmanifolds admitting an invariant  generalized K\"ahler structure. Moreover,  we  prove some results in relation to the existence of holomorphic Poisson structures and to the pluriclosed flow. \end{abstract}

\maketitle

\section{Introduction}

Generalized K\"ahler structures were introduced and studied by Gualtieri \cite{Gua,Gua1} in the more general context of generalized geometry started by Hitchin in  \cite{Hit1}.

Recall that a {\em generalized K\"ahler structure} on a $2n$-dimensional manifold $M$ is a pair of commuting complex structures $({\mathcal J}_1, {\mathcal J}_2)$ on the vector bundle $TM \oplus T^*M$, which are  integrable with respect to the (twisted) Courant bracket on $TM \oplus T^*M$,  are compatible with the natural inner-product  $\langle \cdot, \cdot \rangle$ of signature $(2n,2n)$ on
 $TM \oplus T^*M$  and such that the quadratic form $\langle {\mathcal J}_1 \cdot , {\mathcal J}_2 \cdot \rangle$ is positive definite on $TM \oplus T^*M$.

By  \cite{AG, Gua}  it turns out that a generalized K\"ahler structure  on $M$   is equivalent to a pair of Hermitian structures 
$(J_+, g) $ and  $(J_-, g),$  where $J_{\pm}$  are two integrable almost complex structures on $M$ and $g$  is a Hermitian metric with respect to $J_{\pm}$, such that the 3-form
 $$
H = d^c_+ \omega_+  = -  d^c_-  \omega_-$$ is closed, where   $\omega_{\pm} (\cdot , \cdot) = g(J_{\pm} \cdot , \cdot)$ are  the    fundamental 2-forms associated with the Hermitian structures $(J_{\pm},g)$  and  $d^c_{\pm} = i( \overline \partial_{\pm}  - \partial_{\pm})$ are the  operators associated with the complex structures  $J_{\pm}$.  
In particular, any K\"ahler metric  $g$ on a complex manifold $(M, J)$  gives rise to a trivial generalized K\"ahler structure by taking $J_+ = J$ and $J_- =  \pm J$.

In the context of Hermitian geometry, the closed 3-form $H$  is also  called the torsion of the generalized K\"ahler structure and it can be  identified with the torsion of the Bismut (or Strominger) connection associated with the Hermitian structure $(J_{\pm},g)$  (see \cite{Bismut,Gau}). A Hermitian  structure $(J, g)$   whose fundamental form $\omega$  is $\partial \overline \partial$-closed is  called  \emph{strong K\"ahler with torsion} (shortly SKT)  or  \emph{pluriclosed}, so  a generalized K\"ahler manifold $(M,J_+,J_-, g)$ consists  of a pair of SKT structures $(J_+, g, \omega_+)$ and  $(J_-, g, \omega_-)$   with opposite Bismut torsion $3$-form.

Hitchin \cite{Hit}  proved  that if  a complex manifold $(M,J) $ admits  a generalized K\"ahler structure $(J_+,  J_-, g, H)$ such that $J = J_+$ and  $J_+, J_-$  do not commute, then the commutator defines a holomorphic Poisson structure  $\pi = [J_+,J_-]g^{-1}$  on $(M,J)$. In this  case  the generalized K\"ahler structure is called {\em non-split}.   If the complex structures $J_+$ and $J_-$  commute,  the generalized K\"ahler structure is said to be {\em{split}} since    $Q = J_+ J_-$  is an involution of the tangent bundle $TM$ and   one has the  splitting $TM = T_+M  \oplus T_- M$  as a direct sum of the  $(\pm 1)$-eigenspaces of $Q$ \cite{AG}.

There are many explicit constructions of non-trivial generalized K\"ahler structures, e.g.\   \cite{AD, AGG, AG, BM,  BCG,  CG2, DM,  FT, Hit}. In particular,  a non-K\"ahler  compact  example    is given  by a  six-dimensional     solvmanifold, i.e., a compact quotient  of  a solvable Lie group by  a uniform discrete subgroup,   endowed with a non-trivial invariant generalized K\"ahler structure  \cite{FT}. This is  in contrast with the case of (compact) nilmanifolds which cannot admit any invariant generalized K\"ahler structures unless they are tori \cite{Cav}. Nevertheless, all six-dimensional nilmanifolds admit invariant generalized complex structures \cite{CG}.

By  \cite{Has}  a solvmanifold has a K\"ahler structure if and only if it is covered by a complex torus which has a structure of  complex torus bundle over a complex torus. No general restrictions  on the existence of generalized K\"ahler structures are known in the case of  compact solvmanifolds.  

The only known examples of (non-K\"ahler) solvable Lie  groups admitting left-invariant  generalized K\"ahler structures are almost abelian \cite{AL,FT}. 
Recall that a connected Lie group $G$ is called \emph{almost abelian} if its Lie algebra $\mathfrak{g}$ admits a codimension-one abelian ideal. In this paper $G$ is always assumed to be connected and simply connected as well.  A    characterization of left-invariant SKT structures on almost abelian Lie groups of any dimension was  obtained in \cite{AL}, but  in real dimension six no classification result is known even for the existence of left-invariant complex structures. Recently,    it  has been   shown that  using almost abelian Lie groups  it is also possible to construct  compact  examples of SKT manifolds whose Bismut connection is  K\"ahler-like  \cite{FTa,ZZ}.

In this paper we first classify, up to isomorphism, six-dimensional  almost abelian Lie groups  admitting  left-invariant complex structures (Theorem \ref{CPX}). This classification can be useful to study also other  types  of Hermitian metrics. We then classify, up to isomorphism, six-dimensional  (non-K\"ahler) almost abelian Lie groups admitting left-invariant SKT structures.  In particular, we prove  that there exist only two families of six-dimensional indecomposable unimodular SKT non-nilpotent almost abelian Lie algebras (Theorem \ref{th_SKT}). 
One  of these Lie algebras corresponds to the example of compact solvmanifold  constructed  in \cite{FT}. Moreover, some of the Lie groups corresponding to the other family of Lie algebras admit compact quotients, too. We  also  discuss  some  results highlighting the differences with the nilpotent case.

Using the characterization  in  \cite{AG, Hit} for split and non-split generalized K\"ahler structures and studying the existence of holomorphic Poisson structures,   we establish which six-dimensional almost abelian Lie groups  have  left-invariant  generalized K\"ahler structures (Theorems \ref{th_GenKahler} and   \ref{th_SGK}). In particular, we show that a six-dimensional unimodular (non-K\"ahler) SKT non-nilpotent almost abelian Lie algebra admitting  holomorphic Poisson structures has to be decomposable and we prove that all left-invariant generalized K\"ahler structures on (non-K\"ahler) six-dimensional almost abelian Lie groups have to be split.  We provide new examples of non-K\"ahler compact solvmanifolds admitting invariant generalized K\"ahler structures. These, together with the example constructed in \cite{FT}, determine the six-dimensional compact almost abelian solvmanifolds admitting invariant generalized K\"ahler structures.
Finally, we study the behavior of the generalized K\"ahler structures  on six-dimensional almost abelian  Lie groups  under the pluriclosed  flow introduced by Streets  and Tian  in \cite{Str, ST, ST1}  and developed in \cite{AL} for almost abelian Lie groups.

The paper is structured as follows: in Section \ref{sec_prelim} we review some  known facts  about generalized K\"ahler structures. Section \ref{sec_SKT}  contains the classification of six-dimensional non-nilpotent almost abelian Lie groups admitting a left-invariant complex structure  and the classification of SKT non-nilpotent almost abelian Lie groups.
Section \ref{sec_HolP} is devoted to    the description  of six-dimensional SKT almost abelian Lie groups  whose complex structure admits non-trivial holomorphic Poisson structures and to the classification of six-dimensional almost abelian Lie groups admitting left-invariant generalized K\"ahler structures. Finally, in Section \ref{sec_flow} we  analyze the behavior of the left-invariant generalized K\"ahler structures under the pluriclosed flow, showing that they are expanding solitons.

{\it Acknowledgements.} The authors would like to thank  Ramiro Lafuente and Luigi Vezzoni  for useful discussions and Jeffrey  Streets for pointing out the reference \cite{Str0}. The authors are also grateful to an anonymous referee for useful comments. The paper is supported by Project PRIN 2017 \lq \lq Real and complex manifolds: Topology, Geometry and Holomorphic Dynamics" and by GNSAGA of INdAM.

\section{Preliminaries on Generalized K\"ahler geometry} \label{sec_prelim}

Generalized geometry deals with structures on the generalized tangent bundle $\mathbb{T}M=TM \oplus T^*M$ of a smooth manifold $M$ of dimension $2n$.

Following \cite{Gua}, $\mathbb{T}M$ can be equipped with a natural inner product $\left< \cdot,\cdot\right>$ of signature $(2n,2n)$,
\[
\left< X + \xi, Y + \eta \right> \coloneqq \frac{1}{2} \left( \eta(X) + \xi(Y) \right),
\]
and, after fixing a closed $3$-form $H$ on $M$, with a bracket operation $[\cdot,\cdot]_H$ called \emph{Courant bracket}
\[
[X+\xi,Y+\eta]_H = [X,Y] + \mathcal{L}_X \eta - \mathcal{L}_Y \xi - \frac{1}{2}d(\eta(X)-\xi(Y)) + \iota_Y \iota_X H, \quad  X+\xi,Y+\eta \in \Gamma(\mathbb{T}M).
\]
The   Courant  bracket  is  said to be \emph{$H$-twisted}  if $H \neq 0$ and \emph{untwisted} if  $H=0$.

Fixing a closed $3$-form $H$ on $M$, a \emph{generalized complex structure} on the pair $(M,H)$ is an almost complex structure $\mathcal{J}$ on $\mathbb{T}M$, i.e., $\mathcal{J} \in \Gamma(\mathbb{T}^*M \otimes \mathbb{T}M)$, $\mathcal{J}^2=-\text{Id}_{\mathbb{T}M}$, which is orthogonal with respect to the inner product $\left< \cdot,\cdot \right>$ and whose $i$-eigenbundle inside $\mathbb{T}M \otimes \C$ is involutive with respect to the $H$-twisted Courant bracket.

For the untwisted case, basic examples of generalized complex structures are provided by classical complex structures $J$ and symplectic structures $\omega$ (namely, non-degenerate closed $2$-forms), interpreted as automorphisms of $\mathbb{T}M$ in matrix form as
\[
\mathcal{J}_J=\begin{pmatrix} -J & 0 \\ 0 & J^* \end{pmatrix},\quad \mathcal{J}_\omega = \begin{pmatrix} 0 & -\omega^{-1} \\ \omega & 0 \end{pmatrix},
\]
respectively. See \cite[Examples 4.20, 4.21]{Gua} for details.

It is possible to associate with  every generalized complex structure $\mathcal{J}$ on $M$ a complex line subbundle $U_{L_\mathcal{J}}$ of the complexified exterior bundle $\Lambda T^*M \otimes \C$, where $L_\mathcal{J}$ denotes the $i$-eigenbundle with respect to $\mathcal{J}$ inside $\mathbb{T}M \otimes \C$. Then $U_{L_\mathcal{J}}$ is by definition the annihilator of $L_\mathcal{J}$ with respect to the spinorial action on complex differential forms, namely
\[
U_{L_\mathcal{J}} = \{ \varphi \in \Lambda T^*M \otimes \C,\, \iota_X \varphi + \xi \wedge \varphi = 0 \text{ for all $X+\xi \in L_\mathcal{J}$}\}.
\] 
The bundle $U_{L_\mathcal{J}}$ takes the name of \emph{canonical bundle} associated with $\mathcal{J}$. We say that $U_{L_\mathcal{J}}$ is \emph{holomorphically trivial} if there exists a nowhere-vanishing section of $U_{L_\mathcal{J}}$ which is closed with respect to the twisted de Rham differential $d-H \wedge$, where the closed $3$-form $H$ corresponds to the twist with respect to which $\mathcal{J}$ is integrable.

A \emph{generalized Riemannian metric} on $M$ is the choice of a $\mathbb{T}M$-subbundle of rank $2n$ on which the inner product is positive-definite. Denoting this subbundle by $E_+$ and its orthogonal complement by $E_-$, one can define the associated involutive automorphism of $\mathbb{T}M$ $\mathcal{G} \coloneqq \text{Id}_{E_+}-\text{Id}_{E_-}$, so that the induced inner product on $\mathbb{T}M$, denoted again by $\mathcal{G}$,
\[
\mathcal{G}(z_1,z_2) \coloneqq \left<\mathcal{G}z_1,z_2\right>, \quad z_1,z_2 \in \Gamma(\mathbb{T}M),
\]
is positive definite.

By \cite[Section 6.2]{Gua}, a generalized Riemannian metric $\mathcal{G}$, viewed as  an automorphism of $\mathbb{T}M$, is always of the form
\[
\mathcal{G}=e^{B}\begin{pmatrix} 0 & g^{-1} \\ g & 0  \end{pmatrix} e^{-B},
\]
for some Riemannian metric  $g$ and  $2$-form $B$ on $M$, where  $e^B$  denotes  the \emph{$B$-field transformation}
\[
e^B = \begin{pmatrix} 1 & 0 \\ B & 1 \end{pmatrix}
\]
and the   functions $g$ and $B$   are   defined by
\[
g(X)(\cdot) \coloneqq g(X,\cdot), \quad B(X)(\cdot) \coloneqq B(X,\cdot), \quad X \in \Gamma (TM).
\]
Note that the map $g^{-1}$ exists by the non-degeneracy of $g$.

\begin{definition} {\normalfont (\cite{Gua})}
A \emph{generalized K\"ahler structure} on $M$ is a pair of commuting generalized complex structures $(\mathcal{J}_1,\mathcal{J}_2)$ on $M$ which are integrable with respect to the same $H$-twisted  Courant bracket and  such that $\mathcal{G}=-\mathcal{J}_1\mathcal{J}_2$ is a generalized Riemannian metric on $M$.
\end{definition}

Actually, a generalized K\"ahler structure can be  restated in terms of Hermitian geometry in the following way: by \cite{Gua, AG}, it is equivalent to a bi-Hermitian structure $(J_+,J_-,g)$, where $J_\pm$ are two complex structures and $g$ is a Hermitian metric with respect to both $J_+$ and $J_-$, satisfying
\[
d^c_+ \omega_+ + d^c_- \omega_- =0,\quad dd^c_+\omega_+ = dd^c_- \omega_- = 0,
\]
where $\omega_\pm(\cdot,\cdot) = g(J_\pm \cdot, \cdot)$ and $d^c_{\pm}=-J_\pm d J_\pm$. In more refined terms, a generalized K\"ahler structure is therefore equivalent to a triple $(J_+,J_-,g),$ where $(J_\pm,g)$ are SKT structures with opposite Bismut torsion $3$-form. Recall that  an SKT structure  is a Hermitian structure whose fundamental form is $dd^c$-closed, or equivalently $\partial \overline \partial$-closed.

In this light, it is clear that trivial examples of generalized K\"ahler structures are provided by genuine K\"ahler structures $(J,g)$, by setting $J_+=J$ and $J_-=\pm J$.

In general, a generalized K\"ahler structure is said to be \emph{split} when the two complex structures $J_\pm$ commute, i.e., $[J_+,J_-]=0$: the name comes from the fact that, in this case, $Q \coloneqq J_+J_-$ defines an involution of $TM$ inducing the splitting $TM=T_+M \oplus T_-M$ as a direct sum of the $(\pm 1)$-eigenbundles with respect to $Q$ (see \cite{AG}). 

When the generalized K\"ahler structure $(J_+,J_-,g)$ is non-split, that is, $J_{\pm}$  do not commute, we still have strong restrictions on the behavior of $[J_+,J_-]$.
To proceed, we need to recall the definition of holomorphic Poisson structure.

In general,  given a complex manifold $(M, J),$  the  complex structure  $J$ determines the \emph{Cauchy-Riemann operator} (see \cite{Gau})
\[
\delb \colon \Gamma(T^{1,0}M) \to \Gamma((T^{0,1}M)^* \otimes T^{1,0}M), 
\]
defined by
\[
\delb_XY \coloneqq [X,Y]^{1,0}, \quad X \in \Gamma(T^{0,1}M),\, Y \in \Gamma(T^{1,0}M),
\]
where  $T^{1,0}M$ and $T^{0,1}M$ denote the $(\pm i)$-eigenbundles of $J$, and   $(\, \cdot \, )^{1,0}$ is the projection from $TM \otimes \C$ onto  $T^{1,0}M$. This extends to an operator on $T^{2,0}M=\Lambda^2T^{1,0}M$ by means of
\[
\delb_X (Y \wedge Z) \coloneqq \delb_XY \wedge Z + Y \wedge \delb_X Z.
\]

Another fundamental operator is the \emph{Schouten bracket}, extending the bracket of vector fields to a bracket for sections of $\Lambda^p TM$, for all $p$. We are interested in the case $p=2$, so that we have
\begin{equation} \label{schoutenbracket}
[X_0 \wedge X_1, Y_0 \wedge Y_1] = \sum_{j,k=0}^1 (-1)^{j+k} [X_j,Y_k] \wedge X_{j+1} \wedge Y_{k+1},
\end{equation}
where the indices in the summation are meant mod $2$ and $X_j,Y_j \in \Gamma(TM)$, $j=0,1$.

\begin{definition}
A \emph{holomorphic Poisson structure} on a complex manifold $(M,J)$ is provided by a $(2,0)$-vector field $\pi \in \Gamma(T^{2,0}M)$ which is both \emph{holomorphic} and \emph{Poisson}, namely
\[
\delb \pi =0,\quad [\pi,\pi]=0.
\]
\end{definition}

Now, let $(J_+,J_-,g)$ be a generalized K\"ahler structure on $M$ and consider the commutator $[J_+,J_-] \in \Gamma(T^*M \otimes TM)$. Applying the inverse of the metric $g$ one gets a bivector $[J_+,J_-]g^{-1} \in \Gamma(\Lambda^2TM)$ which is of type $(2,0)+(0,2)$ with respect to both $J_+$ and $J_-$. It was proven in \cite[Proposition 5]{Hit} that its $(2,0)$-part with respect to $J_+$ (resp. $J_-$) defines a holomorphic Poisson structure with respect to $J_+$ (resp. $J_-$).

\section{Classification of six-dimensional SKT almost abelian Lie groups} \label{sec_SKT}

A characterization of SKT almost abelian Lie groups  in any dimension was obtained in \cite{AL} and  a classification, up to isomorphism,  of  six-dimensional  simply connected almost abelian Lie groups     was given  in \cite{Mu3, Sha} (see  also Tables  \ref{table-indecomp} and  \ref{table-decomp} in the Appendix).  Note that we shall follow the notation given in \cite{Mu3, Sha} to name the associated  Lie algebras; for
instance, the notation $\mathfrak{g}_2 \oplus 4\R=\left(f^{16},0,0,0,0,0\right)$ means that $\mathfrak{g}_2 \oplus 4 \R$
is the (decomposable) Lie algebra determined by a basis  of 1-forms $\{ f^1, \ldots f^6 \}$  such that $df^1= f^1 \wedge f^6$, $d f^j =0$, $j = 2, \ldots, 6$.

In this section we  first  classify, up to isomorphism,  six-dimensional simply connected  almost abelian  Lie groups admitting a left-invariant complex structure  and then establish which of those admit a left-invariant SKT structure. Note that, using the ``symmetrization'' process described in \cite{Bel, FG, Uga}, the existence of an SKT metric on a compact solvmanifold $\Gamma \backslash G$ implies the existence of an invariant one, so in this context the assumption of left-invariance is not restrictive.

Let $G$ be a  $2n$-dimensional  simply connected almost abelian Lie group, i.e., such that  its Lie algebra  $\mathfrak{g}$  has a codimension-one abelian ideal $\mathfrak{h}$. In particular, notice that  $\mathfrak{g}$ has to be  solvable. 
Choosing a basis $\left\{ e_1,\ldots,e_{2n}\right\}$ for $\mathfrak{g}$ such that $\mathfrak{h}=\text{span}\left<e_1,\ldots,e_{2n -1}\right>$, then $\text{ad}_{e_{2n}}$ leaves $\mathfrak{h}$ invariant. The whole Lie algebra structure of $\mathfrak{g}$ is determined by  the derivation $\text{ad}_{e_{2n}} \vert_{\mathfrak{h}}$, allowing to identify $\mathfrak{g}$ with the semidirect product $\R \ltimes_{\text{ad}_{e_{2n}} \vert_{\mathfrak{h}}} \R^{2n-1}$.

A  left-invariant almost Hermitian structure on $G$ is induced  by an almost Hermitian structure $(J,g)$  on the Lie algebra of $ \mathfrak{g}$, where  $J$ is an almost complex structure of $\mathfrak{g}$ and  $g$ is a inner product compatible with $J$. Denote by $\mathfrak{k}\coloneqq \mathfrak{h}^{\perp_g} \cong \mathfrak{g} \slash \mathfrak{h}$ the  $1$-dimensional  orthogonal complement of  $\mathfrak{h}$ in $\mathfrak{g}$ with respect to $g$.  Then $J \mathfrak{k} \subset \mathfrak{h}$, since $J$ is orthogonal, and we can denote $\mathfrak{h}_1 \coloneqq (\mathfrak{k} \oplus J\mathfrak{k})^{\perp_g}$.  Again by orthogonality of $J$, $\mathfrak{h}_1$ must be $J$-invariant, so that we can denote $J_1 \coloneqq J\rvert_{\mathfrak{h}_1}$. 

One is then free to consider an orthonormal basis $\{e_1,\ldots,e_{2n}\}$ of $\mathfrak{g}$ \emph{adapted} to the splitting $\mathfrak{g} = J\mathfrak{k} \oplus \mathfrak{h}_1 \oplus \mathfrak{k}$, i.e., such that 
\[
\mathfrak{k}=\text{span}\left<e_{2n}\right>,\quad \mathfrak{h}_1=\text{span}\left<e_2,\ldots,e_{2n-1}\right>,\quad Je_1=e_{2n}.
\]
With respect to such a basis, the $(2n -1)  \times  (2n - 1)$ matrix $B$ associated with $\text{ad}_{e_{2n}}\rvert_{\mathfrak{h}}$ is of the form
\[
B=\begin{pmatrix} a & w^t \\ v & A \end{pmatrix},
\]
for some $a \in \R$, $v,w \in \mathfrak{h}_1$, $A \in \mathfrak{gl}(\mathfrak{h}_1)$. As shown in \cite{AL}, the almost Hermitian structure $(J,g)$ is thus fully characterized by the algebraic data $(a,v,w,A)$.

If the complex structure $J$ is integrable,  $\mathfrak{h}_1$ must be $\operatorname{ad} \mathfrak{k}$-invariant and the $\operatorname{ad} \mathfrak{k}$-action on $\mathfrak{h}_1$ must commute with $J_1$:
\begin{lemma}{\normalfont (\cite{LR})}
$(J,g)$ is Hermitian if and only if $w=0$ and $[A,J_1]=0$.
\end{lemma}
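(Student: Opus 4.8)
The plan is to use the standard fact that, since by construction $g$ is an inner product compatible with $J$, the pair $(J,g)$ is Hermitian precisely when $J$ is integrable, i.e.\ when the Nijenhuis tensor
\[
N_J(X,Y) = [JX,JY] - J[JX,Y] - J[X,JY] - [X,Y]
\]
vanishes identically on $\mathfrak{g}$. Throughout I would exploit the almost abelian structure: since $\mathfrak{h}$ is abelian, the only nonzero brackets are $[e_{2n},x]=Bx$ for $x\in\mathfrak{h}$, with $B$ in the stated block form relative to the decomposition $\mathfrak{h}=\langle e_1\rangle\oplus\mathfrak{h}_1$. Explicitly, for $x=x_1e_1+\tilde x$ with $\tilde x\in\mathfrak{h}_1$ one has $Bx=(ax_1+w^t\tilde x)e_1+x_1v+A\tilde x$, and I would keep the complex structure in the normalized form $Je_1=e_{2n}$, $Je_{2n}=-e_1$, $J\rvert_{\mathfrak{h}_1}=J_1$.

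First I would cut down the cases to check. As $N_J$ is a skew-symmetric tensor and $\mathfrak{g}=\R e_{2n}\oplus\mathfrak{h}$, it suffices to evaluate it on pairs drawn from $\mathfrak{h}$ and on pairs of the form $(e_{2n},x)$ with $x\in\mathfrak{h}$. For the latter, the standard identity $N_J(JX,Y)=-J\,N_J(X,Y)$ (a one-line verification) together with $e_{2n}=Je_1$ gives $N_J(e_{2n},x)=-J\,N_J(e_1,x)$, so the whole tensor is controlled by the restriction of $N_J$ to $\mathfrak{h}\times\mathfrak{h}$.

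Next I would compute $N_J$ on $\mathfrak{h}\times\mathfrak{h}$ directly. Expanding the four brackets with the formulas above and collecting the outcome along the three summands $\langle e_1\rangle$, $\langle e_{2n}\rangle$, $\mathfrak{h}_1$ gives a closed expression in the data $(a,v,w,A,J_1)$. The decisive evaluation is on $x=e_1$ and $y=\tilde y\in\mathfrak{h}_1$, which I expect to yield
\[
N_J(e_1,\tilde y) = (w^t J_1\tilde y)\,e_1 - (w^t\tilde y)\,e_{2n} + [A,J_1]\tilde y .
\]
Requiring this to vanish for every $\tilde y\in\mathfrak{h}_1$ forces $w=0$ (from the $e_{2n}$-component) and $[A,J_1]=0$ (from the $\mathfrak{h}_1$-component), which is the forward implication. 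Conversely, substituting $w=0$ and $[A,J_1]=0$ into the general expression for $N_J(x,y)$, I would check that each of the three components vanishes identically for all $x,y\in\mathfrak{h}$, so that $N_J\equiv 0$ and $J$ is integrable.

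The only genuine work is the block-matrix bookkeeping in expanding the four brackets: the cross terms mixing the $e_1$- and $\mathfrak{h}_1$-directions, together with the terms involving $v$ and $a$, must be tracked carefully, and at one point one uses that $J_1$ is invertible to pass from the vanishing of $w^t J_1$ (equivalently $w^t$) to $w=0$. Everything else is formal, and the reduction $N_J(Je_1,x)=-J\,N_J(e_1,x)$ is what keeps the case analysis short.
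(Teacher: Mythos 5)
Your proof is correct. The paper does not actually prove this lemma --- it is quoted from the reference \cite{LR} --- so there is no internal proof to compare against; your direct Nijenhuis-tensor computation is the standard argument and it checks out. Since $g$ is compatible with $J$ by construction, Hermitian indeed reduces to integrability; the identity $N_J(JX,Y)=-JN_J(X,Y)$ correctly reduces everything to $\mathfrak{h}\times\mathfrak{h}$ (the only brackets involving $e_{2n}$ come from $Jx$ having an $e_{2n}$-component when $x_1\neq 0$); the key evaluation
\[
N_J(e_1,\tilde y) = (w^t J_1\tilde y)\,e_1 - (w^t\tilde y)\,e_{2n} + [A,J_1]\tilde y
\]
is right; and in the converse direction the general expression collapses to $N_J(x,y)=x_1[A,J_1]\tilde y - y_1[A,J_1]\tilde x$, which vanishes under the hypotheses. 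One tiny remark: the $e_{2n}$-component already gives $w^t\tilde y=0$ for all $\tilde y\in\mathfrak{h}_1$, so the appeal to invertibility of $J_1$ is not actually needed.
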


From now on we assume that the structure $( J,g)$  is Hermitian, so that the matrix $B$ associated with $\text{ad}_{e_{2n}}\rvert_\mathfrak{h}$, with respect to the orthonormal basis $\{e_1,\ldots,e_{2n}\}$,  is of the form
\begin{equation} \label{matrix_Hermitian}
B=\begin{pmatrix} a & 0 \\ v & A \end{pmatrix},
\end{equation}
where $a \in \R$, $v \in \mathfrak{h}_1$, $A \in \mathfrak{gl}(\mathfrak{h}_1)$, $[A,J_1]=0$. The algebraic data reduces to the triple $(a,v,A)$. The Lie algebra  determined by this data will be denoted by $\mathfrak{g}(a,v,A)$.

The classification of six-dimensional nilpotent Lie groups admitting a left-invariant complex structure was obtained in \cite{S}: in particular, the Lie algebra of a six-dimensional nilpotent almost abelian Lie group admitting a left-invariant complex structure has to be isomorphic to one among
\begin{itemize}
\item[]  $(0,0,0,0,0,f^{12})$,
\item[] $(0,0,0,0,f^{12},f^{13})$,
\item[] $(0,0,0,f^{12},f^{13},f^{14})$.
\end{itemize}

Recall that we assume  every almost abelian Lie group $G$ to be connected and simply connected.

\begin{theorem} \label{CPX}
Let $G$ be a six-dimensional non-nilpotent almost abelian Lie group. Then $G$ admits a left-invariant complex structure if and only if its Lie algebra $\mathfrak{g}$ is isomorphic to one of the following:
 \begin{itemize}
\setlength{\itemindent}{-2em}
\item[] $\mathfrak{k}_1^{p,r}=(f^{16},pf^{26},pf^{36},rf^{46},rf^{56},0)$,  \,  $1 \geq |p| \geq |r| > 0$,\smallskip
\item[]
$\mathfrak{k}_{2}^{q,r}=(f^{16},f^{26},qf^{36},rf^{46},rf^{56},0)$, \, $1 > |q| \geq |r| > 0$, \smallskip
\item[]
$\mathfrak{k}_{3}^{q,s}=(f^{16},f^{26},qf^{36},qf^{46},sf^{56},0)$, \, $1 \geq |q| > |s| > 0$, \smallskip
\item[] $\mathfrak{k}_{4}^{q}=(f^{16},f^{26}+f^{36},f^{36},qf^{46},qf^{56},0)$, \, $1 \geq |q| >0$, \smallskip
\item[] $\mathfrak{k}_{5}^{p}=(f^{16},pf^{26}+f^{36},pf^{36},f^{46},pf^{56},0)$, \, $1 > |p| >0$, \smallskip
\item[] $\mathfrak{k}_{6}^{p}=(f^{16},pf^{26}+f^{36},pf^{36},pf^{46}+f^{56},pf^{56},0)$,\smallskip
\item[] $\mathfrak{k}_{7}^{p}=(pf^{16}+f^{26},pf^{26}+f^{36},pf^{36},pf^{46}+f^{56},pf^{56},0)$, \, $p \neq 0$, \smallskip
\item[] $\mathfrak{k}_{8}^{p,q,s}=(pf^{16},qf^{26},qf^{36},sf^{46}+f^{56},-f^{46}+sf^{56},0)$,  \,  $|p| \geq |q|>0$, \smallskip
\item[] $\mathfrak{k}_{9}^{p,r,s}=(pf^{16},pf^{26},rf^{36},sf^{46}+f^{56},-f^{46}+sf^{56},0)$, \, $|p| > |r| > 0$, \smallskip
\item[]  $\mathfrak{k}_{10}^{p,r}=(pf^{16},pf^{26}+f^{36},pf^{36},rf^{46}+f^{56},-f^{46}+rf^{56},0)$, \, $p \neq 0$, \smallskip
\item[] $\mathfrak{k}_{11}^{p,q,r,s}=(pf^{16},qf^{26}+f^{36},-f^{26}+qf^{36},rf^{46}+sf^{56},-sf^{46}+rf^{56},0)$,  \,  $ps \neq 0$,  $(|q| > |r|)$ or
\item[] \hskip 11 cm   $(|q|=|r|,$ $|s| \leq 1)$,\smallskip
\item[] $\mathfrak{k}_{12}^{p,q}=(pf^{16},qf^{26}+f^{36}-f^{46},-f^{26}+qf^{36}-f^{56},qf^{46}+f^{56},-f^{46}+qf^{56},0)$, \,  $p \neq 0$, \smallskip
\item[] $\mathfrak{k}_{13} = (f^{16},0,0,0,0,0)$,\smallskip
\item[] $\mathfrak{k}_{14}=(f^{16},f^{26},0,0,0,0)$, \smallskip
\item[] $\mathfrak{k}_{15}^p=(pf^{16}+f^{26},-f^{16}+pf^{26},0,0,0,0)$, \smallskip
\item[] $\mathfrak{k}_{16}=(f^{16},f^{26}+f^{36},f^{36},0,0,0)$, \smallskip
\item[] $\mathfrak{k}_{17}^p=(f^{16},pf^{26},pf^{36},0,0,0)$, \,  $1 \geq |p| >0$,\smallskip
\item[] $\mathfrak{k}_{18}^q=(f^{16},f^{26},qf^{36},0,0,0)$, \,  $1 > |q| >0$,\smallskip
\item[] $\mathfrak{k}_{19}^{p,q}=(pf^{16},qf^{26}+f^{36},-f^{26}+qf^{36},0,0,0)$,  \, $p \neq 0$, \smallskip
\item[] $\mathfrak{k}_{20}^q=(f^{16},f^{26},qf^{36},qf^{46},0,0)$, \,  $1 \geq |q| > 0$,\smallskip
\item[] $\mathfrak{k}_{21}=(f^{16},f^{26},f^{46},0,0,0)$,\smallskip
\item[] $\mathfrak{k}_{22}^{q,r}=(f^{16},f^{26},qf^{36}+rf^{46},-rf^{36}+qf^{46},0,0)$, \, $r \neq 0$, \smallskip
\item[] $\mathfrak{k}_{23}^p=(pf^{16}+f^{26},-f^{16}+pf^{26},f^{46},0,0,0)$, \smallskip
\item[] $\mathfrak{k}_{24}=(f^{16}+f^{26},f^{26},f^{36}+f^{46},f^{46},0,0)$, \smallskip
\item[] $\mathfrak{k}_{25}^{p,q,r}=(pf^{16}+f^{26},-f^{16}+pf^{26},qf^{36}+rf^{46},-rf^{36}+qf^{46},0,0)$, \, $r \neq 0$, $(|p| > |q|)$ or
\item[]  \hskip 10.3 cm  $(|p|=|q|$, $|r| \leq 1)$, \smallskip
\item[] $\mathfrak{k}_{26}^p=(pf^{16}+f^{26}-f^{36},-f^{16}+pf^{26}-f^{46},pf^{36}+f^{46},-f^{36}+pf^{46},0,0)$.
\end{itemize}
An explicit complex structure,   in terms of the  dual basis $\{f_1, \ldots, f_6\} $,   is given in Table \ref{table-cpx}    for every Lie algebra in the previous list.\end{theorem}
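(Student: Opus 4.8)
The plan is to convert the existence of a left-invariant complex structure into a purely linear-algebraic statement about the derivation $B = \operatorname{ad}_{e_6}\vert_{\mathfrak{h}}$, and then to classify the admissible $B$ up to the isomorphism relation for almost abelian Lie algebras. First I would note that any complex structure $J$ on $\mathfrak{g}$ admits a compatible inner product (take $g_0(\cdot,\cdot)+g_0(J\cdot,J\cdot)$ for an arbitrary $g_0$), so the existence of a left-invariant complex structure is equivalent to the existence of a left-invariant Hermitian structure. By the Lemma of \cite{LR} the latter amounts to the existence of an adapted orthonormal basis in which $B$ has the block form \eqref{matrix_Hermitian}; equivalently, $\mathfrak{g}$ admits a complex structure if and only if $\mathfrak{h}=\R^{5}$ has a codimension-one $B$-invariant subspace $\mathfrak{h}_1$ (with $\dim\mathfrak{h}_1=4$) on which $A\coloneqq B\vert_{\mathfrak{h}_1}$ commutes with a complex structure $J_1$. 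Note that $a\in\R$ and $v\in\mathfrak{h}_1$ are left completely free by the integrability condition, so only the behavior of $A$ and the invariance of $\mathfrak{h}_1$ matter.

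Next I would make the condition ``$A$ commutes with a complex structure on $\R^4$'' explicit. Such matrices are exactly the realifications of $2\times 2$ complex matrices; in terms of the real Jordan form this means that each pair of genuinely complex conjugate eigenvalues contributes a standard $2\times 2$ rotation–scaling block, while every real eigenvalue must have its Jordan blocks occurring in equal-size pairs. A codimension-one $B$-invariant $\mathfrak{h}_1=\ker\xi$ corresponds to a real eigencovector $B^t\xi=a\xi$, the scalar $a$ being the $(1,1)$-entry. Combining the two observations pins down the admissible $5\times 5$ matrices as those conjugate to $\begin{pmatrix} a & 0 \\ v & A \end{pmatrix}$ with $a\in\R$, $v\in\R^4$ and $A$ of the complex-paired type above.

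I would then classify these $B$ under the equivalence $B\sim cPBP^{-1}$ ($c\in\R^{\times}$, $P\in GL(5,\R)$) that governs isomorphism of almost abelian Lie algebras, organizing the enumeration by the complex Jordan type of $A$: two complex eigenvalue pairs; one complex pair together with a doubled real eigenvalue; two doubled real eigenvalues; and the two non-semisimple cases coming from a $2\times 2$ complex Jordan block with nonreal, respectively real, eigenvalue. For each type I would further split according to whether the factored-out direction $e_1$ couples to $\mathfrak{h}_1$ (that is, whether $v$ can be removed by conjugation or produces a genuine Jordan block linking $e_1$ to $\mathfrak{h}_1$, as in the families $\mathfrak{k}_4,\mathfrak{k}_5,\mathfrak{k}_6,\mathfrak{k}_{10},\mathfrak{k}_{24}$), and according to the rank of $A$, which separates the generic families $\mathfrak{k}_1$--$\mathfrak{k}_{12}$ from the lower-rank ones $\mathfrak{k}_{13}$--$\mathfrak{k}_{26}$. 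Using conjugation to order the eigenvalues and scaling to normalize the dominant one, I would reduce each case to the stated normal form with its parameter ranges (e.g.\ $1\geq|p|\geq|r|>0$), and finally read off an explicit $J_1$, hence $J$, in each case to populate Table \ref{table-cpx}. As a consistency check I would match the survivors against the master classification of \cite{Mu3, Sha} recorded in the Appendix, confirming that the excluded algebras are exactly those whose $B$ has no codimension-one invariant subspace of complex-paired type.

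The main obstacle I anticipate is the bookkeeping required to make the classification simultaneously complete and irredundant. The delicate points are: a single $B$ may possess several real eigencovectors, hence several choices of $\mathfrak{h}_1$, so the block decomposition is not unique and one must verify that distinct choices do not lead to genuinely different algebras; the scaling freedom interacts with the ordering of eigenvalue moduli, which is the source of the precise inequalities and of the boundary cases such as $|q|=|r|$, $|s|\leq 1$ in $\mathfrak{k}_{11}$ and $\mathfrak{k}_{25}$; and the non-semisimple cases with $v\neq 0$ must be analyzed carefully to decide when the off-diagonal coupling can be conjugated away and when it yields an honest new Jordan block. Establishing that no two of the listed $\mathfrak{k}_i$ with distinct parameters in their stated ranges are isomorphic, and that the list is exhaustive, is where the bulk of the careful computation lies.
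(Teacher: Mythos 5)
Your proposal is correct and follows essentially the same route as the paper: reduce to a Hermitian structure via an auxiliary compatible metric, invoke the lemma of \cite{LR} to obtain the block form \eqref{matrix_Hermitian} with $[A,J_1]=0$, characterize the admissible $A$ through their (real/complex) Jordan type, decide when $v$ can be conjugated away versus when it enlarges a Jordan block, and enumerate the resulting algebras up to the equivalence $B\sim cPBP^{-1}$. Your ``realification of $2\times 2$ complex matrices'' criterion (real eigenvalues must have Jordan blocks in equal-size pairs) is just a cleaner packaging of the paper's case-by-case elimination of the canonical forms $A_4$--$A_7$ and the constraints $q=p$, $s=r$, etc., so the two arguments coincide in substance.
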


\begin{proof}
Let $J$ be a complex structure on the Lie algebra $\mathfrak{g}$ of $G$. Without loss of generality, one can consider a $J$-Hermitian metric $g$ on $\mathfrak{g}$ and carry out the procedure we have described: let $\{e_1,\ldots,e_6\}$ be an orthonormal basis of $({\mathfrak{g}}, g)$  adapted to the splitting $\mathfrak{g}=J\mathfrak{k} \oplus \mathfrak{h}_1 \oplus \mathfrak{k}$, so that the matrix $B$ associated with $\text{ad}_{e_6}\rvert_{\mathfrak{h}}$ is of the form
\[
B=\begin{pmatrix} a & 0 \\ v & A \end{pmatrix},
\]
with $[A,J_1]=0$. Our discussion will be based on the matrix $A$ and on the interplay between the complex structure  $J$ and the $\operatorname{ad} \mathfrak{k}$-action on $\mathfrak{h}_1$, where $\mathfrak{k}=\text{span}\left<e_6\right>$.

The first step consists into bringing $A$ into a canonical form: depending on its eigenvalues and their multiplicities, there exists a basis $\{e_2,\ldots,e_5\}$ of $\mathfrak{h}_1$ such that, up to rescaling $e_6$, $A$ is represented by a real $4 \times 4$ matrix of one of the following types:
\begin{alignat*}{3}
&A_1=\left( \begin{smallmatrix} p &0&0&0 \\ 0&q&0&0 \\ 0&0&r&0 \\ 0&0&0&s \end{smallmatrix} \right),\quad &  
&A_2=\left( \begin{smallmatrix} p &1&0&0 \\ -1&p&0&0 \\ 0&0&q& 0\\ 0&0&0&r \end{smallmatrix} \right),\quad & 
&A_3=\left( \begin{smallmatrix} p &1&0&0 \\ -1&p&0&0 \\ 0&0&q&r \\ 0&0&-r&q \end{smallmatrix} \right), \\[2pt]
&A_4=\left( \begin{smallmatrix} p&1&0&0 \\ 0&p&0&0 \\0&0&q&0\\ 0&0&0&r \end{smallmatrix} \right),\quad & 
&A_5=\left( \begin{smallmatrix} p &1&0&0 \\ 0&p&0&0 \\ 0&0&q&r \\ 0&0&-r&q \end{smallmatrix} \right), \quad & 
&A_6=\left( \begin{smallmatrix} p &1&0&0 \\ 0&p&1&0 \\ 0&0&p&0 \\ 0&0&0&q \end{smallmatrix} \right), \\[2pt]
&A_7=\left( \begin{smallmatrix} p &1&0&0 \\ 0&p&1&0 \\ 0&0&p&1 \\ 0&0&0&p \end{smallmatrix} \right),\quad & 
&A_8=\left( \begin{smallmatrix} p &1&0&0 \\ 0&p&0&0 \\ 0&0&q&1 \\ 0&0&0&q \end{smallmatrix} \right),\quad &
&A_9=\left( \begin{smallmatrix} p &1&-1&0 \\ -1&p&0&-1 \\ 0&0&p&1 \\ 0&0&-1&p \end{smallmatrix} \right),
\end{alignat*}
for some $p,q,r,s \in \R$, assuming the off-diagonal parameters are non-zero to avoid redundancy.

We now need to establish whether, for some value of the parameters, these matrices may commute with some other matrix squaring to $-\text{Id}$, playing the role of $J_1$. The condition $[A,J_1]=0$ forces the complex structure to preserve the isotypic components of the $\operatorname{ad} \mathfrak{k}$-action on $\mathfrak{h}_1$, since it must map each $\operatorname{ad} \mathfrak{k}$-submodule of $\mathfrak{h}_1$ into an equivalent $\operatorname{ad} \mathfrak{k}$-submodule. For this reason, in particular, there cannot exist odd-dimensional isotypic components. Using these arguments, we can readily discard case $A_6$ and conclude that, in cases $A_2$, $A_4$ and $A_5$, both $\text{span}\left<e_2,e_3\right>$ and $\text{span}\left<e_4,e_5\right>$ should be $J_1$-invariant. Moreover, in case $A_1$, we must require $q=p$, $s=r$ (up to reordering) and similarly $r=q$ in cases $A_4$ and $A_2$.

A simple explicit computation shows that a matrix of the form
\[
\begin{pmatrix} p & 1 \\ 0 & p \end{pmatrix}
\]
can never commute with a matrix squaring to $-\text{Id}$, so that, in light of the discussion above, we may discard cases $A_4$ and $A_5$ and impose $q=p$ in case $A_8$. Case $A_7$ can be easily discarded with an analogous computation. All the remaining cases commute with a suitable $J_1$, namely:
\begin{itemize}
\item[] $A_1$ (with $q=p$, $s=r$), $A_2$ (with $r=q$), $A_3$, $A_9$, with $J_1e_2=e_3$, $J_1e_4=e_5$, \smallskip
\item[] $A_8$ (with $q=p$), with $J_1e_2=e_4$, $J_1e_3=e_5$.
\end{itemize}
Returning to the whole matrix $B$, after the change of basis for $\mathfrak{h}_1$ that we described, we have
\[
B=\begin{pmatrix} a & 0 \\ v & A_i \end{pmatrix},
\]
for $a \in \R$, $v=(v_1,v_2,v_3,v_4)^t$ and $A_i$ one among $A_1$ (with $q=p$, $s=r$), $A_2$ (with $r=q$), $A_3$, $A_8$ (with $q=p$), $A_9$. Now, $a$ is clearly a real eigenvalue of $B$ so that, if it is different from all the eigenvalues of $A_i$, a suitable change of basis of $\mathfrak{h}$ allows to get $v=0$.
Instead, if $a$ coincides with some eigenvalue of $A_i$,  one should check whether the dimension of the $a$-eigenspace of $B$ is either one more than the dimension of the $a$-eigenspace of $A$, in which case, as before, we can get $v=0$ up to a change of basis, or equal to it. To see what happens in this case, let $C_a^k$ denote the $k \times k$ Jordan block
\begin{equation} \label{Jordan_block}
C_a^k=\begin{tikzpicture}[baseline=(current bounding box.center)]
\matrix (m) [matrix of math nodes,nodes in empty cells,right delimiter={)},left delimiter={(}, row sep=0.02cm,column sep=0.02cm]{
a  & 1 &   &  \\
  & & & \\
 & & & 1 \\
 & & & a\\
} ;
\draw[loosely dotted] (m-1-1)-- (m-4-4);
\draw[loosely dotted] (m-1-2)-- (m-3-4);
\end{tikzpicture},\quad C_a^1=\left( a \right).
\end{equation}
Choosing one of the Jordan blocks of $A_i$ relative to the eigenvalue $a$, up to a change of basis   of $\mathfrak{h}_1$, $A$ is in block form
\[
A=\begin{pmatrix} C^k_a & 0 \\ 0 & A^\prime \end{pmatrix},
\]
for some $k$ and some $(4-k) \times (4-k)$ matrix $A^\prime$.
Choosing $v$ suitably and up to a change of basis of $\mathfrak{h}$, it is easy to see that $B$ can be brought into the block form
\[
B=\begin{pmatrix} C_a^{k+1} & 0 \\ 0 & A^\prime \end{pmatrix}.
\]
Thanks to this, we can easily see which algebras one can get starting from the possible matrices $A_i$, $i = 1,2,3,8,9$, up to isomorphism, depending on the value of their parameters and on the behavior of the corresponding matrix $B$:
\begin{itemize}
\item[] $A_1$ yields $\mathfrak{k}_1^{p,r}$, $\mathfrak{k}_2^{q,r}$, $\mathfrak{k}_3^{q,s}$, $\mathfrak{k}_{4}^{q}$, $\mathfrak{k}_{5}^{p}$, $\mathfrak{k}_{13}$, $\mathfrak{k}_{14}$, $\mathfrak{k}_{16}$, $\mathfrak{k}_{17}^p$, $\mathfrak{k}_{18}^q$, $\mathfrak{k}_{20}^{q}$ and $\mathfrak{k}_{21}$,
\item[] $A_2$ yields  $\mathfrak{k}_{8}^{p,q,s}$, $\mathfrak{k}_{9}^{p,r,s}$, $\mathfrak{k}_{10}^{p,r}$, $\mathfrak{k}_{15}^p$, $\mathfrak{k}_{19}^{p,q}$, $\mathfrak{k}_{22}^{q,r}$ and $\mathfrak{k}_{23}^p$,
\item[] $A_3$ yields  $\mathfrak{k}_{11}^{p,q,r,s}$ and $\mathfrak{k}_{25}^{p,q,r}$,
\item[] $A_8$ yields $\mathfrak{k}_{6}^{p}$, $\mathfrak{k}_{7}^p$ and $\mathfrak{k}_{24}$,
\item[] $A_9$ yields  $\mathfrak{k}_{12}^{p,q}$ and $\mathfrak{k}_{26}^p$.
\end{itemize}
This means that for any  of the 26 Lie algebras  in  the previous list one can find an isomorphism with an almost abelian Lie algebra $\mathfrak{g}(a,v,A_i)$, for suitable $a \in \R$, $v \in \R^4$,  and suitable parameters in the entries of the  matrices $A_i$,  $i=1,2,3,8,9$. By construction, $\mathfrak{g}(a,v,A_i)$ supports the complex structure
\[
J=\begin{pmatrix} 0&0&-1 \\ 0&J_1&0 \\ 1&0&0 \end{pmatrix},
\]
in  the fixed basis $\{e_1,\ldots,e_6\}$, where $J_1$ was described above. Then, one can simply use this isomorphism to pull back $J$ and obtain a   complex structure on every Lie algebra. We provide an explicit example in Table \ref{table-cpx} in the Appendix.
\end{proof} 

\begin{remark}
Note that  many of the Lie algebras  in  the previous theorem are  depending on one or more continuous parameters. On each Lie algebra, different sets of parameters may yield isomorphic Lie algebras. In this light, some of the restrictions on the parameters allow to reduce this redundancy. Moreover, one could merge two or more of the previous  Lie algebras together: for example, $\mathfrak{k}_1^{p,r}$, $\mathfrak{k}_2^{q,r}$, $\mathfrak{k}_3^{q,s}$, $\mathfrak{k}_{13}$, $\mathfrak{k}_{17}^p$ and $\mathfrak{k}_{18}^q$ can all be recovered from the Lie algebra
\[
(f^{16},pf^{26},pf^{36},rf^{46},rf^{56},0),
\]
allowing the parameters $p$ and $r$ to take any real value. We decided not to do this to separate decomposable and indecomposable Lie algebras and to maintain the parallelism with the classification of six-dimensional almost abelian Lie algebras in \cite{Mu3,Sha}, as we highlight in Table \ref{table-cpx}.
\end{remark}

\begin{remark}
Note that six-dimensional unimodular solvable Lie algebras admitting  complex structures with  a non-zero closed $(3,0)$-form were classified in \cite{FOU}. 
Among them, the only ones which are almost abelian are  $\mathfrak{k}_{20}^{-1}$  and  $\mathfrak{k}_{25}^{p,-p,1}$.
\end{remark}

The characterization of the SKT condition  on  an almost abelian Lie algebra $\mathfrak{g} (a,v,A)$, determined by the data  $(a,v,A)$,  was obtained in Lemma 4.2 and Theorem 4.6 in \cite{AL}.
\begin{theorem} {\normalfont (\cite{AL})}  \label{SKT_bracket}
$( J,g)$  is SKT  if and only if
\[
aA+A^2+A^tA \in \mathfrak{so} ({\mathfrak{h}_1})
\]
or, equivalently, if $A$ is \emph{normal}, namely $[A,A^t]=0$, and the real part of each eigenvalue of $A$ is equal to either $-\frac{a}{2}$ or $0$.
\end{theorem}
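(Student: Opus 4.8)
The plan is to verify the SKT condition $dd^c\omega=0$ directly from the structure equations. Fix the adapted orthonormal basis $\{e_1,\dots,e_{2n}\}$ and its dual $\{f^1,\dots,f^{2n}\}$. Since $\mathfrak h$ is abelian and $\operatorname{ad}_{e_{2n}}\!\vert_{\mathfrak h}=B$, the only structure equations are $df^{2n}=0$ and $df^i=\sum_j B_{ij}\,f^j\wedge f^{2n}$ for $i<2n$. From these one extracts the basic formula: for any $\beta\in\Lambda^2\mathfrak h^*$, regarded as a skew-symmetric matrix on $\mathfrak h$,
\[
d\beta=-\langle B^t\beta+\beta B\rangle\wedge f^{2n},
\]
where $\langle\cdot\rangle$ denotes the $2$-form attached to a skew matrix. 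Writing $\omega=f^1\wedge f^{2n}+\omega_1$, where $\omega_1$ is the fundamental form of $(\mathfrak h_1,J_1,g\vert_{\mathfrak h_1})$ and corresponds to the skew matrix $\Omega_1:=J_1^t=-J_1$, the Hermitian normalization $w=0$ gives $df^1=a\,f^1\wedge f^{2n}$, hence $d(f^1\wedge f^{2n})=0$ and $d\omega=-\langle M\rangle\wedge f^{2n}$ with $M=B^t\Omega+\Omega B$, $\Omega=\operatorname{diag}(0,\Omega_1)$. Using $[A,J_1]=0$, the $\mathfrak h_1$-block of $M$ simplifies to $M_1:=(A+A^t)\Omega_1$, while the off-diagonal (i.e.\ $v$-dependent) blocks only couple $e_1$ with $\mathfrak h_1$.

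Next I would apply $J$. Using $f^{2n}\circ J=f^1$ and $J\vert_{\mathfrak h_1}=J_1$, the identity $d^c\omega(X,Y,Z)=d\omega(JX,JY,JZ)$ (signs are irrelevant for the vanishing) turns $d\omega$ into $d^c\omega=\tilde\eta\wedge f^1$, where $\tilde\eta(\,\cdot\,,\,\cdot\,)=(-\langle M\rangle)(J\,\cdot\,,J\,\cdot\,)$. The crucial observation is that the $v$-dependent part of $M$ survives only in the $f^{2n}\wedge\mathfrak h_1^*$-component of $\tilde\eta$; applying $d$ to such a term produces a second factor $f^{2n}$ (every $df^i$ and $df^1$ carries $f^{2n}$, while $df^{2n}=0$), so it contributes nothing to $dd^c\omega$. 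Hence only the $\mathfrak h_1$-block matters, and a short computation with $\Omega_1$ and $[A,J_1]=0$ gives $\tilde\eta\vert_{\mathfrak h_1}=-\langle M_1\rangle$, exactly as when $v=0$. Applying $d$ once more, using $df^1=a\,f^1\wedge f^{2n}$ and the basic formula above, yields
\[
dd^c\omega=-\langle aM_1+A^tM_1+M_1A\rangle\wedge f^1\wedge f^{2n}.
\]
Substituting $M_1=(A+A^t)\Omega_1=-(A+A^t)J_1$ and commuting $J_1$ through $A,A^t$ gives $aM_1+A^tM_1+M_1A=-\bigl(a(A+A^t)+A^2+(A^t)^2+2A^tA\bigr)J_1$, which is twice the symmetric part of $aA+A^2+A^tA$, multiplied by $-J_1$. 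Since $J_1$ is invertible, $dd^c\omega=0$ is therefore equivalent to $aA+A^2+A^tA\in\mathfrak{so}(\mathfrak h_1)$.

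For the second equivalence I would argue purely algebraically. Writing $A=P+Q$ with $P=\tfrac12(A+A^t)$ symmetric and $Q=\tfrac12(A-A^t)$ skew, one has $[A,A^t]=-2[P,Q]$ and $A+A^t=2P$, so the vanishing of the symmetric part of $aA+A^2+A^tA$ rewrites as $-[P,Q]=aP+2P^2$. Diagonalizing the symmetric matrix $P$ in an orthonormal basis, $[P,Q]$ has vanishing diagonal while $aP+2P^2$ is diagonal; comparing entries forces $p_j(a+2p_j)=0$ for every eigenvalue $p_j$ of $P$, i.e.\ $p_j\in\{0,-\tfrac a2\}$, and $(p_j-p_k)Q_{jk}=0$ for all $j,k$, i.e.\ $[P,Q]=0$. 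Thus the condition is equivalent to $A$ being normal together with $\operatorname{Re}(\lambda)\in\{0,-\tfrac a2\}$ for every eigenvalue $\lambda$ of $A$ (the real parts being exactly the eigenvalues of $P$ once $A$ is normal); the converse is immediate, since normality makes $[P,Q]$ vanish and reduces the equation to $aP+2P^2=0$.

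The main obstacle is the bookkeeping in the $d^c$ step: tracking the action of $J$ on the three slots and the signs in the basic formula, and, above all, justifying that the $v$-dependent contributions drop out. The clean reason is the one isolated above — the $v$-part of $d^c\omega$ already carries a factor $f^{2n}$, and the second exterior derivative necessarily introduces another $f^{2n}$, annihilating it — so the SKT condition is genuinely independent of $v$ and collapses to the single quadratic matrix identity.
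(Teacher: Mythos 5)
The paper does not actually prove this statement: it is imported verbatim from \cite{AL} (Lemma 4.2 and Theorem 4.6 there), so there is no in-paper argument to compare against. Your blind proof is, however, correct as a self-contained verification, and it is the natural direct computation one would expect. I checked the key steps: the formula $d\beta=-\langle B^t\beta+\beta B\rangle\wedge f^{2n}$ follows from $df^i=\sum_j B_{ij}f^j\wedge f^{2n}$; with $w=0$ the block $M$ has $\mathfrak h_1$-part $A^t\Omega_1+\Omega_1A=(A+A^t)\Omega_1$ and a $v$-part coupling $e_1$ to $\mathfrak h_1$; after applying $J$ the $v$-part lands in $\mathfrak h_1^*\wedge f^1\wedge f^{2n}$ and is killed by the second $d$ because every term of $d\alpha$ carries a factor $f^{2n}$ while $df^{2n}=0$ --- this is exactly consistent with the explicit torsion form the paper computes later in the proof of Proposition \ref{prop_exact}, where the $v$-dependent terms of $H$ all contain $e^{2n}$. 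The final identity $aM_1+A^tM_1+M_1A=-\bigl(N+N^t\bigr)J_1$ with $N=aA+A^2+A^tA$ checks out using $[A,J_1]=[A^t,J_1]=0$, and the matrix inside $\langle\cdot\rangle$ is automatically skew (being of the form $a\beta+A^t\beta+\beta A$ with $\beta$ skew), so its vanishing as a $2$-form really is its vanishing as a matrix and invertibility of $J_1$ gives $N+N^t=0$. The algebraic reformulation via $A=P+Q$, diagonalizing $P$ and separating the diagonal equation $p_j(a+2p_j)=0$ from the off-diagonal one $(p_j-p_k)Q_{jk}=0$, is also correct, including the identification of the real parts of the eigenvalues of a normal $A$ with the eigenvalues of $P$. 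In short: the proof is sound; its only ``difference'' from the paper is that the paper delegates the entire argument to \cite{AL}, whereas you supply the computation.
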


In a similar way we  can show the following 
\begin{lemma} \label{Kahler_bracket}
$ (J,g)$ is  K\"ahler   if and only if
     $A \in \mathfrak{so} ({\mathfrak h}_1)$ and $ v=0.$
\end{lemma}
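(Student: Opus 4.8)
The plan is to compute the exterior differential of the fundamental form $\omega$ directly, since $(J,g)$ is K\"ahler precisely when $d\omega=0$. First I would record the structure equations in the adapted orthonormal coframe $\{e^1,\dots,e^{2n}\}$ dual to $\{e_1,\dots,e_{2n}\}$. Because $\mathfrak h$ is abelian and $\operatorname{ad}_{e_{2n}}\!\vert_{\mathfrak h}=B$ with $B=\left(\begin{smallmatrix} a&0\\ v&A\end{smallmatrix}\right)$, the only nonzero differentials are
\[
de^{2n}=0,\qquad de^1=a\,e^1\wedge e^{2n},\qquad de^i=v_i\,e^1\wedge e^{2n}+\sum_{j=2}^{2n-1}A_{ij}\,e^j\wedge e^{2n}\quad(2\le i\le 2n-1),
\]
where the first row of $B$ reduces to $(a,0,\dots,0)$ thanks to the Hermitian normal form $w=0$, and $v_i=B_{i1}$. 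Since $Je_1=e_{2n}$ and $\mathfrak h_1$ is $J$-invariant, the fundamental form splits as $\omega=e^1\wedge e^{2n}+\omega_1$, where $\omega_1$ is the fundamental form of $(J_1,g\rvert_{\mathfrak h_1})$ on $\mathfrak h_1$; writing $\Omega$ for its (antisymmetric) matrix in the orthonormal basis, one has the identity $\Omega=-J_1$.

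Next I would compute $d\omega$. The term $d(e^1\wedge e^{2n})$ vanishes outright, since $de^1\wedge e^{2n}=a\,e^1\wedge e^{2n}\wedge e^{2n}=0$ and $de^{2n}=0$, so $d\omega=d\omega_1$. Substituting the structure equations into $d\omega_1=\sum_{i<j}(\omega_1)_{ij}(de^i\wedge e^j-e^i\wedge de^j)$ produces two groups of terms living in independent subspaces of $\Lambda^3\mathfrak g^*$: an \emph{$A$-part} of the form $\gamma\wedge e^{2n}$ with $\gamma\in\Lambda^2\mathfrak h_1^*$, and a \emph{$v$-part} proportional to $e^1\wedge e^{2n}\wedge(\,\cdot\,)$ with the dotted slot a $1$-form on $\mathfrak h_1$. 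A short bookkeeping computation identifies $\gamma$ with the $2$-form associated to $-\tfrac12(A^t\Omega+\Omega A)$ (which is automatically antisymmetric) and the $v$-part with $-\,e^1\wedge e^{2n}\wedge(\Omega v)$. Since these occupy independent components, $d\omega=0$ is equivalent to the simultaneous vanishing $A^t\Omega+\Omega A=0$ and $\Omega v=0$.

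Finally I would simplify these two conditions using the Hermitian constraint $[A,J_1]=0$ together with $\Omega=-J_1$. The equation $\Omega v=0$ is immediately equivalent to $v=0$, as $\Omega=-J_1$ is invertible. For the other, substituting $\Omega=-J_1$ turns $A^t\Omega+\Omega A=0$ into $A^tJ_1+J_1A=0$; replacing $J_1A$ by $AJ_1$ via $[A,J_1]=0$ gives $(A^t+A)J_1=0$, hence $A^t+A=0$, i.e.\ $A\in\mathfrak{so}(\mathfrak h_1)$. Conversely, each of these conditions clearly forces the corresponding group of terms to vanish, so the equivalence is established. The only delicate point is the clean separation of the $v$- and $A$-contributions and the reduction of $A^tJ_1+J_1A=0$ to antisymmetry of $A$, which is exactly where $[A,J_1]=0$ enters; everything else is the routine computation of $d$ on an almost abelian coframe, entirely parallel to the SKT computation underlying Theorem \ref{SKT_bracket}.
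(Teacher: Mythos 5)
Your proposal is correct and follows essentially the same route as the paper: both arguments amount to a direct computation of $d\omega$ (the paper evaluates the invariant formula $d\omega(X,Y,Z)=g([X,Y],JZ)+g([Z,X],JY)+g([Y,Z],JX)$ on basis triples, while you expand $d\omega$ in the adapted coframe via the structure equations), and both separate the result into a $v$-component, which vanishes iff $v=0$, and an $A$-component, which vanishes iff $A^tJ_1+J_1A=0$, reduced to $A+A^t=0$ using $[A,J_1]=0$. The bookkeeping in your $A$-part (antisymmetrization of $A^t\Omega$ and the identification $\Omega=-J_1$) checks out, so no gaps.
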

\begin{proof}
Let $\omega(\cdot,\cdot)=g(J\cdot,\cdot)$ be the fundamental form associated with the Hermitian structure $(J, g)$. Then,
\[
d\omega(X,Y,Z)=g([X,Y],JZ)+g([Z,X],JY)+g([Y,Z],JX), \quad X,Y,Z \in \mathfrak{g}.
\]
 Clearly, the above expression vanishes when all three entries lie in the abelian ideal $\mathfrak{h}$, so that we only need to check the value of $d\omega(e_{2n},e_1,Z)$ and $d\omega(e_{2n},Y,Z)$, for $Y,Z \in \mathfrak{h}_1$. First,
\[
d\omega(e_{2n},e_1,Z)=g([e_{2n},e_1],JZ)=g(v,JZ),
\]
which, by  the $J$-invariance of $\mathfrak{h}_1$, vanishes for all $Z \in \mathfrak{h}_1$ if and only if $v=0$. Then,
\begin{align*}
d\omega(e_{2n},Y,Z)&=g([e_{2n},Y],JZ)-g([e_{2n},Z],Y) \\
                   &=g(AY,J_1Z)-g(AZ,J_1Y) \\
                   &=-g((J_1A+A^tJ_1)Y,Z) \\
                   &=-g((A+A^t)JY,Z),
\end{align*}
where we used that $g$ is Hermitian and $[A,J_1]=0$.  Therefore,  $d\omega(e_{2n},Y,Z)$ vanishes for all $Y,Z \in \mathfrak{h}_1$ if and only if $A+A^t=0$, that is, $A \in \mathfrak{so} ({\mathfrak h}_1)$.
\end{proof}

\begin{remark} \label{remark_normal}
We recall the well-known spectral theorem for normal operators: if $A$ is a normal operator on a metric real vector space $(V,g)$, then it is unitarily diagonalizable as an operator on  the complexification $(V \otimes \C, g \otimes \C)$, while there always exists an orthonormal real basis of $V$ such that the matrix associated with $A$ is in block diagonal form, $A=\text{diag}(\lambda_1,\ldots,\lambda_k,D_1,\ldots,D_h)$, where $\lambda_j \in \R$, $j=1,\ldots,k$, and the $D_j$'s, $j=1,\ldots,h$, are $2 \times 2$ blocks of the form
\[
D_j = \begin{pmatrix} a_j & b_j \\ -b_j & a_j \end{pmatrix},
\]
$a_j,b_j \in \R$, $j=1,\ldots,h$.
The eigenvalues of $A$ are thus $\lambda_1,\ldots,\lambda_k$ and $a_j\pm ib_j$, $j=1,\ldots,h$. In particular, notice that a normal operator is skew-symmetric if and only if all its eigenvalues are pure imaginary.
\end{remark}

Six-dimensional nilpotent Lie groups  admitting a left-invariant SKT structure have been classified in \cite{FPS} and it turns out that the only six-dimensional SKT  almost abelian nilpotent Lie algebra is decomposable and  isomorphic to the direct sum of  $3 \R \oplus \mathfrak{h}_3$, where   $\mathfrak{h}_3$ is  the real 3-dimensional Heisenberg algebra. 

We shall now classify six-dimensional  SKT non-nilpotent almost abelian Lie  groups  which do not admit any left-invariant K\"ahler structures.

\begin{theorem} \label{th_SKT}
Let  $G$  be  a non-nilpotent almost abelian   Lie  group of dimension six. Then 
\begin{enumerate}
\item[$(1)$]  $G$ admits a  left-invariant K\"ahler structure if and only if  its Lie algebra $\mathfrak{g}$ is isomorphic to one the following:
\smallskip
\begin{itemize}
\setlength{\itemindent}{-1em}
\item[]  $\mathfrak{k}_{11}^{p,0,0,s}=\left(pf^{16},f^{36},-f^{26},sf^{56},-sf^{46},0\right)$,  \,  $p \neq 0$, $1 \geq |s| > 0$, \smallskip
\item[] $\mathfrak{k}_{13}=\left(f^{16},0,0,0,0,0\right)$, \smallskip
\item[] $\mathfrak{k}_{15}^0=\left(f^{26},-f^{16},0,0,0,0\right)$, \smallskip
\item[] $\mathfrak{k}_{19}^{p,0}= \left(pf^{16},f^{36},-f^{26},0,0,0\right)$, \,  $p \neq 0$, \smallskip
\item[] $\mathfrak{k}_{25}^{0,0,r}=\left(f^{26},-f^{16},rf^{46},-rf^{36},0,0\right)$,  \,  $1 \geq |r| > 0$.
\end{itemize}
Among these,  only $\mathfrak{k}_{15}^0$ and $\mathfrak{k}_{25}^{0,0,r}$ are unimodular. 

\smallskip

\item[$(2)$]  $G$  admits a left-invariant  SKT structure, but it does not admit any  left-invariant K\"ahler structures, if and only if  its Lie algebra $\mathfrak{g}$ is isomorphic to one of the following:
\smallskip
\begin{itemize}
\setlength{\itemindent}{-1em}
\item[]  $\mathfrak{k}_1^{-\frac{1}{2},-\frac{1}{2}}=\left(f^{16},-\frac{1}{2} f^{26}, -\frac{1}{2} f^{36}, -\frac{1}{2} f^{46}, -\frac{1}{2} f^{56},0\right)$, \smallskip
\item[]  $\mathfrak{k}_{8}^{p,-\frac{p}{2},0}=\left(p f^{16}, -\frac{p}{2} f^{26}, -\frac{p}{2} f^{36}, f^{56},-f^{46},0\right)$, \, $p \neq 0$, \smallskip
\item[]  $\mathfrak{k}_{8}^{p,-\frac{p}{2},-\frac{p}{2}}=\left(p f^{16}, -\frac{p}{2} f^{26}, -\frac{p}{2} f^{36},-\frac{p}{2}f^{46} + f^{56},-f^{46}-\frac{p}{2}f^{56},0\right)$, \,  $p \neq 0$, \smallskip
\item[] $\mathfrak{k}_{11}^{p,-\frac{p}{2},0,s}=\left(p f^{16}, -\frac{p}{2}f^{26} +f^{36}, -f^{26}-\frac{p}{2}f^{36},sf^{56},-sf^{46},0\right)$, \,   $p \neq 0$, $1 \geq |s| > 0$, \smallskip
\item[]  $\mathfrak{k}_{11}^{p,-\frac{p}{2},-\frac{p}{2},s}=\left(p f^{16}, -\frac{p}{2}f^{26} +f^{36}, -f^{26}-\frac{p}{2}f^{36},-\frac{p}{2}f^{46}+sf^{56},-sf^{46}-\frac{p}{2}f^{56},0\right)$, \, $p \neq 0$,
\item[]  \hskip 11.7 cm  $1 \geq |s| > 0$, \smallskip
\item[] $\mathfrak{k}_{17}^{-\frac{1}{2}} = \left(f^{16},-\frac{1}{2} f^{26}, -\frac{1}{2} f^{36},0,0,0\right)$, \smallskip
\item[]  $\mathfrak{k}_{19}^{p,-\frac{p}{2}} =\left(pf^{16},-\frac{p}{2} f^{26} + f^{36},- f^{26} -\frac{p}{2} f^{36},0,0,0\right)$, \,  $p \neq 0$,\smallskip
\item[]  $\mathfrak{k}_{23}^0=\left(f^{26},-f^{16},f^{46},0,0,0\right)$.
\end{itemize}
Among these,  only $\mathfrak{k}_{8}^{p,-\frac{p}{2},0}$, $\mathfrak{k}_{11}^{p,-\frac{p}{2},0,s}$, $\mathfrak{k}_{17}^{-\frac{1}{2}}$, $\mathfrak{k}_{19}^{p,-\frac{p}{2}}$ and $\mathfrak{k}_{23}^0$  are unimodular.
\end{enumerate}
\end{theorem}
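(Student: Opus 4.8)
\emph{Proof strategy.} The plan is to run the classification of Theorem~\ref{CPX} through the algebraic criteria for the SKT and K\"ahler conditions, working directly with the normal form $(a,v,A)$ of the data rather than with the named structure equations. The starting observation is that both conditions are expressed purely in terms of $a$ and $A$, and of $v$ only in the K\"ahler case: by Theorem~\ref{SKT_bracket}, $(J,g)$ is SKT if and only if $A$ is normal and every eigenvalue of $A$ has real part in $\{-\tfrac a2,0\}$, while by Lemma~\ref{Kahler_bracket}, $(J,g)$ is K\"ahler if and only if $A$ is skew-symmetric and $v=0$. Since a skew-symmetric matrix is normal with purely imaginary spectrum (Remark~\ref{remark_normal}), the K\"ahler condition is the special case of the SKT one in which $a$ and all real parts vanish and $v=0$; so I will analyse the SKT condition first and isolate the K\"ahler locus inside it.

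First I would eliminate the non-normal canonical forms. In the proof of Theorem~\ref{CPX} the admissible matrices $A$ reduce to $A_1,A_2,A_3,A_8,A_9$; of these, $A_8$ and $A_9$ contain genuine Jordan blocks and hence fail to be diagonalizable, so by Remark~\ref{remark_normal} they are not normal and can carry no SKT structure, a fortiori no K\"ahler one. This immediately discards the families $\mathfrak{k}_6,\mathfrak{k}_7,\mathfrak{k}_{12},\mathfrak{k}_{24},\mathfrak{k}_{26}$ and leaves only the three normal types $A_1=\mathrm{diag}$, $A_2=\mathrm{rot}\oplus\mathrm{scalar}$ and $A_3=\mathrm{rot}\oplus\mathrm{rot}$, whose eigenvalue real parts form a pair $(\alpha,\beta)$.

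Next I would impose $\alpha,\beta\in\{-\tfrac a2,0\}$ on each of $A_1,A_2,A_3$ and read off the resulting Lie algebras. After the scaling normalization of $e_6$ used in Theorem~\ref{CPX}—which either sets $a=1$ (purely real spectrum, type $A_1$) or fixes the off-diagonal entry $1$ of a rotation block and keeps $a=p$ free (types $A_2,A_3$)—the condition forces each real part to be $0$ or $-\tfrac a2$; discarding the all-zero case with $v=0$ (which is K\"ahler) leaves finitely many sub-families. Matching these against the parameter ranges of Theorem~\ref{CPX} produces exactly $\mathfrak{k}_1^{-1/2,-1/2}$ and $\mathfrak{k}_{17}^{-1/2}$ (from $A_1$), $\mathfrak{k}_8^{p,-p/2,0}$, $\mathfrak{k}_8^{p,-p/2,-p/2}$ and $\mathfrak{k}_{19}^{p,-p/2}$ (from $A_2$), and $\mathfrak{k}_{11}^{p,-p/2,0,s}$, $\mathfrak{k}_{11}^{p,-p/2,-p/2,s}$ (from $A_3$). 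For the K\"ahler part I would set all real parts to $0$, so that $A$ is skew, and additionally require $v=0$; when $a\neq 0$ this is automatic up to isomorphism, since then $a$ is not an eigenvalue of $A$ and $v$ is removable, yielding $\mathfrak{k}_{13}$, $\mathfrak{k}_{19}^{p,0}$ and $\mathfrak{k}_{11}^{p,0,0,s}$, while $a=0$ gives the unimodular $\mathfrak{k}_{15}^0$ and $\mathfrak{k}_{25}^{0,0,r}$. Unimodularity in each case is decided by the single scalar equation $\operatorname{tr}(\mathrm{ad}_{e_6})=a+\operatorname{tr}(A)=0$.

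The main obstacle, and the one place where genuine care is required, is the interaction of the vector $v$ with the $a=0$ stratum. When $a=0$ and $A$ is skew but singular, a nonzero $v$ directed into $\ker A$ cannot be removed by a change of basis of $\mathfrak{h}$—it merges the $e_1$-direction with a zero eigenvector into a Jordan block $C_0^2$—so such data are SKT yet \emph{not} K\"ahler; tracking exactly when $v$ is removable is what separates $\mathfrak{k}_{23}^0$ (genuinely non-K\"ahler, with $a=0$, $A$ skew, $v\neq0$) from the K\"ahler algebras with $a=0$, and it also forces the non-nilpotency hypothesis to rule out the degenerate $a=0$, $A=0$ case. The remaining work is purely bookkeeping: confirming that the normalizations and the parameter restrictions built into Theorem~\ref{CPX} (for instance $q\neq 0$ in $\mathfrak{k}_8$ and $1\ge|s|>0$ in $\mathfrak{k}_{11}$) make the above list both exhaustive and free of repetitions, e.g.\ checking that a choice such as $\mathfrak{k}_8^{p,0,-p/2}$ collapses onto $\mathfrak{k}_{19}^{p,-p/2}$ by swapping the two blocks and is therefore not counted twice.
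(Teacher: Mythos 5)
Your proposal is correct and follows essentially the same route as the paper: both rest on Theorem~\ref{SKT_bracket} and Lemma~\ref{Kahler_bracket}, run the eigenvalue conditions through the canonical forms $A_1,A_2,A_3$ (discarding $A_8,A_9$ by non-diagonalizability, hence non-normality), and isolate $\mathfrak{k}_{23}^0$ as the unique case where a non-removable $v$ produces a Jordan block $C_0^2$ with $a=0$; the only difference is organizational, since you treat SKT first and carve out the K\"ahler locus while the paper classifies the K\"ahler algebras first and then weeds out the SKT list. One sentence of yours is imprecise --- the K\"ahler condition does not require $a=0$, only that $A$ be skew-symmetric and $v=0$ --- but your subsequent case analysis correctly produces the K\"ahler algebras with $a\neq 0$ (namely $\mathfrak{k}_{13}$, $\mathfrak{k}_{19}^{p,0}$, $\mathfrak{k}_{11}^{p,0,0,s}$), so this is a slip of phrasing rather than a gap.
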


\begin{proof}
Let us focus  first on the K\"ahler case. By Lemma \ref{Kahler_bracket}, if $(J,g)$ is a K\"ahler structure on $\mathfrak{g}$, we know that, with respect to an orthonormal basis $\{e_1,\ldots,e_6\}$ adapted to the splitting $\mathfrak{g}=J\mathfrak{k} \oplus \mathfrak{h}_1 \oplus \mathfrak{k}$, the matrix $B$ associated with $\text{ad}_{e_6}\rvert_{\mathfrak{h}}$ will be of the form
\[
B=\begin{pmatrix} a&0 \\0&A \end{pmatrix},
\]
for some $a \in \R$, $A \in \mathfrak{so} ({\mathfrak{h}_1})$. By Remark \ref{remark_normal}, up to a change of  the orthonormal basis of $\mathfrak{h}_1$,  the matrix $A$ is of the form
\[
A=\left( \begin{smallmatrix} 0&b&0&0 \\ -b&0&0&0 \\ 0&0&0&c \\ 0&0&-c&0 \end{smallmatrix} \right),
\]
for some $b,c \in \R$.
Up to scaling $e_6$ and reordering the basis of $\mathfrak{h}$ we then get the isomorphism of $\mathfrak{g}$ with one of the five Lie algebras of the statement, depending on the vanishing of $a$, $b$ and/or $c$. Explicitly, a K\"ahler structure on the Lie algebras $\mathfrak{k}_{11}^{p,0,0,s}$, $\mathfrak{k}_{13}$ and  $\mathfrak{k}_{19}^{p,0}$ is given by
\begin{equation} \label{example1}
J=\left(\begin{smallmatrix} 0&0&0&0&0&-1 \\ 0&0&-1&0&0&0 \\ 0&1&0&0&0&0 \\ 0&0&0&0&-1&0 \\ 0&0&0&1&0&0 \\ 1&0&0&0&0&0 \end{smallmatrix}\right),\quad  g=\sum_{i=1}^6 f^i \otimes f^i,
\end{equation}
while on $\mathfrak{k}_{15}^0$ and $\mathfrak{k}_{25}^{0,0,r}$ we have the example
\[
J=\left(\begin{smallmatrix} 0&-1&0&0&0&0 \\ 1&0&0&0&0&0 \\ 0&0&0&-1&0&0 \\ 0&0&1&0&0&0 \\ 0&0&0&0&0&-1 \\ 0&0&0&0&1&0 \end{smallmatrix}\right),\quad  g=\sum_{i=1}^6 f^i \otimes f^i.
\]

If the structure $(J,g)$ is only SKT, then, by Theorem \ref{SKT_bracket}, we know that, if $\{e_1,\ldots,e_6\}$ is an orthonormal basis adapted to the splitting $J \mathfrak{k} \oplus \mathfrak{h}_1 \oplus \mathfrak{k}$, then the matrix $B$ associated with $\text{ad}_{e_6} \rvert_{\mathfrak{h}}$ is of the form
\[
B=\begin{pmatrix} a&0 \\v&A \end{pmatrix},
\]
with $A$ normal and having eigenvalues with real part equal to $0$ or $-\frac{a}{2}$.

By the spectral theorem for normal operators (see Remark \ref{remark_normal}), $A$ is diagonalizable as an endomorphism of $\mathfrak{h}_1 \otimes \C$. Following the proof of Theorem \ref{CPX}, this implies that, if $B$ is not diagonalizable, then its Jordan form can admit only a single non-diagonalizable $2 \times 2$ block $C_a^2$, in the notation of \eqref{Jordan_block}. This can only happen if $a$ is an eigenvalue of $A$, implying $a=0$, ultimately yielding a Lie algebra isomorphic to $\mathfrak{k}_{23}^0$.

We can then proceed by weeding out the algebras of Theorem \ref{CPX} which cannot fulfill the SKT requirements and those which admit K\"ahler structures, which we have already treated. This leaves us exactly with the eight classes of part (2) of the statement.
All these algebras admit an SKT structure: an example on $\mathfrak{k}_{23}^0$ is provided by
\[
J=\left(\begin{smallmatrix} 0&-1&0&0&0&0 \\ 1&0&0&0&0&0 \\ 0&0&0&0&-1&0 \\ 0&0&0&0&0&-1 \\ 0&0&1&0&0&0 \\ 0&0&0&1&0&0 \end{smallmatrix}\right),\quad  g=\sum_{i=1}^6 f^i \otimes f^i.
\]
On the remaining seven classes, an explicit SKT structure is given by \eqref{example1}. \end{proof}

\begin{remark}
Recall (see \cite[Definition 4]{AOUV}) that a Hermitian connection $\nabla$ on a Hermitian manifold $(M,J,g)$ is called \emph{K\"ahler-like} if its curvature 
\[
R^\nabla(X,Y)=[\nabla_X,\nabla_Y]-\nabla_{[X,Y]},\quad X,Y \in \Gamma(TM), 
\]
satisfies the first Bianchi identity
\[
R^\nabla(X,Y)Z + R^\nabla(Y,Z)X + R^\nabla(Z,X)Y=0,\quad X,Y,Z \in \Gamma(TM),
\]
and the type condition
\[
R^\nabla(X,Y)=R^\nabla(JX,JY),\quad X,Y \in \Gamma(TM).
\]
In \cite{FTa} the authors studied this condition for SKT almost abelian Lie groups, obtaining compact examples of almost abelian solvmanifolds whose Bismut connection is K\"ahler-like.

The six-dimensional compact example constructed in \cite[Example 4.5]{FTa} corresponds to the SKT almost abelian Lie algebra $\mathfrak{k}_{23}^0$.
\end{remark}

\begin{remark} \label{SKT_nilp}
We observe that, unlike the nilpotent case (see \cite[Theorem 1.2]{FPS}), given a six-dimensional almost abelian Lie algebra with a complex structure $J$, the SKT condition might be satisfied by only some Hermitian metrics. Take for instance the algebra $\mathfrak{k}_{17}^{-\frac{1}{2}}$ equipped with the complex structure $J$ in \eqref{example1}: the Hermitian metric in \eqref{example1} is SKT, while the Riemannian metric defined by
\[
g=\left( \begin{smallmatrix} 1&0&0&0&0&0 \\ 0&1&0&\frac{1}{2}&0&0 \\ 0&0&1&0&\frac{1}{2}&0 \\ 0&\frac{1}{2}&0&1&0&0 \\ 0&0&\frac{1}{2}&0&1&0 \\ 0&0&0&0&0&1  \end{smallmatrix} \right)
\]
is still Hermitian but does not satisfy the SKT condition, as one can show with a direct computation.
\end{remark}

We can prove that the torsion of the Bismut connection on  a  non-K\"ahler  six-dimensional SKT  almost abelian Lie algebra  $(\mathfrak{g} (a,v, A), J,g)$ cannot be exact.

\begin{proposition} \label{prop_exact}
Let  $(\mathfrak{g} (a,v, A), J,g)$ be   a  six-dimensional SKT non-nilpotent almost abelian Lie algebra   which does not admit K\"ahler structures. Then, the torsion $3$-form $H=d^c\omega$ associated with $(J,g)$ is not exact.
\end{proposition}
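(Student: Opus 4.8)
The plan is to exploit the bigrading of $\Lambda^\bullet\mathfrak g^*$ induced by the splitting $\mathfrak g^* = \mathfrak h^* \oplus \langle f^6\rangle$, where $\mathfrak h = \operatorname{span}\langle e_1,\dots,e_5\rangle$ is the abelian ideal. Let $D$ be the derivation of $\Lambda^\bullet\mathfrak h^*$ extending $f^i \mapsto \sum_{k} B_{ik} f^k$ ($i \le 5$); then $df^i = (Df^i)\wedge f^6$ and $df^6 = 0$, so $d\gamma = -(D\gamma)\wedge f^6$ for $\gamma \in \Lambda^2\mathfrak h^*$. Consequently every exact $3$-form lies in $\Lambda^2\mathfrak h^*\wedge f^6$, and writing $H = H_0 + H_1\wedge f^6$ with $H_0 \in \Lambda^3\mathfrak h^*$ and $H_1 \in \Lambda^2\mathfrak h^*$, the torsion $H$ is exact if and only if $H_0 = 0$ and $H_1 \in \operatorname{Im}\big(D\vert_{\Lambda^2\mathfrak h^*}\big)$. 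The goal is to rule this out.

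First I would compute $H_0$ using $H = d^c\omega$, the identity $H(X,Y,Z) = -d\omega(JX,JY,JZ)$, and the formulas for $d\omega$ from the proof of Lemma \ref{Kahler_bracket}. Since $d\omega \in \Lambda^2\mathfrak h^*\wedge f^6$ and $Je_1 = e_6$ is the only basis vector sent into $\langle e_6\rangle$, a component $H(e_i,e_j,e_k)$ with $i<j<k\le 5$ can be nonzero only if one index equals $1$; one finds
\[
H(e_1,Y,Z) = -g\big((A+A^t)Y,\,J_1 Z\big), \qquad Y,Z \in \mathfrak h_1.
\]
As $A+A^t$ is symmetric and commutes with $J_1$, this alternating form vanishes identically iff $A + A^t = 0$. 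Thus $H_0 \ne 0$ exactly when $A \notin \mathfrak{so}(\mathfrak h_1)$, which already settles every case with non-skew $A$.

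The main case is therefore $A \in \mathfrak{so}(\mathfrak h_1)$, where $H_0 = 0$. Here I would first show $a = 0$: if $a \ne 0$ then $a$ is not an eigenvalue of $A$ (whose spectrum is purely imaginary), so $(A - aI)u = -v$ is solvable and conjugating $B$ by $\left(\begin{smallmatrix}1&0\\u&I\end{smallmatrix}\right)$ removes $v$, giving $\mathfrak g \cong \mathfrak g(a,0,A)$, which is K\"ahler by Lemma \ref{Kahler_bracket} --- contradicting the hypothesis. The same solvability with $a = 0$ forces $A$ to be singular, non-nilpotency forces $A \ne 0$, so $A$ has rank $2$ with a $2$-dimensional $J_1$-invariant kernel, and non-K\"ahlerness forces the component $v_{\ker}$ of $v$ in $\ker A$ to be nonzero. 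Intrinsically this is the Lie algebra $\mathfrak k_{23}^0$.

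In this situation a direct computation yields $H = f^1 \wedge \bar v \wedge f^6$, where $\bar v = g(v,\cdot)\vert_{\mathfrak h_1}$, so $H_1 = f^1 \wedge \bar v$. I would then choose an orthonormal $J_1$-adapted basis with $\ker A = \operatorname{span}\langle e_4,e_5\rangle$ and $A$ acting on $\operatorname{span}\langle e_2,e_3\rangle$ as a nonzero rotation, and evaluate $D$ on the basis of $\Lambda^2\mathfrak h^*$. Requiring a combination to land inside $f^1\wedge\mathfrak h_1^*$ forces the coefficients of $f^2\wedge f^4, f^2\wedge f^5, f^3\wedge f^4, f^3\wedge f^5$ to cancel, and the surviving reachable forms are spanned by $f^1\wedge f^2$, $f^1\wedge f^3$ and $-v_4\, f^1\wedge f^4 + v_3\, f^1\wedge f^5$. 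Hence $H_1 \in \operatorname{Im}(D)$ would require $(v_3,v_4) \parallel (-v_4,v_3)$, i.e.\ $v_3^2 + v_4^2 = 0$, i.e.\ $v_{\ker} = 0$, contradicting non-K\"ahlerness. I expect this final linear-algebra step --- pinning down $\operatorname{Im}(D)\cap\big(f^1\wedge\mathfrak h_1^*\big)$ and recognizing the obstruction as precisely $v_{\ker}\ne 0$ --- to be the crux, the rest being bookkeeping on the bigrading. (As a concrete check, on $\mathfrak k_{23}^0$ one gets $H = -f^3\wedge f^4\wedge f^6$, and no exact $3$-form contains $f^3\wedge f^4\wedge f^6$.)
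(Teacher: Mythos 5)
Your proposal is correct and follows essentially the same route as the paper's proof: both observe that exact $3$-forms lie in $\Lambda^2\mathfrak h^*\wedge f^6$, deduce from the vanishing of the $\Lambda^3\mathfrak h^*$-part of $H$ that $A$ must be skew-symmetric, reduce to $\mathfrak k_{23}^0$, and then kill the residual case by showing the component $f^1\wedge\bar v\wedge f^6$ (with $v_{\ker}\neq 0$) cannot be hit by $d$ of a $2$-form. The only differences are presentational: you phrase the computations invariantly via the derivation $D$ and derive the reduction to $\mathfrak k_{23}^0$ directly from the hypotheses (where the paper reads it off its classification and an explicit coordinate formula for $H$), and your final obstruction $v_3^2+v_4^2=0$ is the same contradiction the paper extracts from the equations $\eta_{25}v_4=-v_1$, $\eta_{25}v_1=v_4$.
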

\begin{proof}
Fix an orthonormal basis $\{e_1,\ldots,e_6\}$ adapted to the splitting $\mathfrak{g}=J\mathfrak{k} \oplus \mathfrak{h}_1 \oplus \mathfrak{k}$ and such that $Je_1=e_6$, $Je_2=e_5$, $Je_3=e_4$. Then we know the matrix $B$ associated with $\text{ad}_{e_6}\rvert_{\mathfrak{h}}$ is of the form
\[
A=\left( \begin{smallmatrix} a&0&0&0&0 \\
v_1&A_{11}&A_{12}&A_{13}&A_{14}\\
v_2&A_{21}&A_{22}&A_{23}&A_{24}\\
v_3&-A_{24}&-A_{23}&A_{22}&A_{21}\\
v_4&-A_{14}&-A_{13}&A_{12}&A_{11} \end{smallmatrix} \right), 
\]
where the symmetries of the $4\times 4$ block $A$ corresponding to $\text{ad}_{e_6}\rvert_{\mathfrak{h}_1}$ are due to the requirement $[A,J_1]=0$. Then, an explicit computation yields
\begin{align*}
H=&(A_{13}-A_{24})(e^{123}-e^{145})-(A_{12}+A_{21})(e^{124}+e^{135})-2A_{22}\,e^{134}-2A_{11}\,e^{125}\\&-v_1\,e^{126}-v_2\,e^{136}-v_3\,e^{146}-v_4\,e^{156}.
\end{align*}
Exact $3$-forms lie in $\Lambda^2 \mathfrak{h}^* \wedge \mathfrak{k}^*$, so if we want $H$ to be exact we get some first restrictions on the entries of $A$, which in particular imply that $A$ is skew-symmetric. We can thus discard all Lie algebras but $\mathfrak{k}_{23}^0$ and the nilpotent $3\R \oplus \mathfrak{h}_3 \cong (0,0,0,0,0,f^{12})$.  In the former case, the eigenvalues of $A$ are necessarily $0$  of multiplicity $2$ and $\pm is$, for some $s \in \R - \{0\}$, so that $\mathfrak{h}_1$ splits into two mutually orthogonal $2$-dimensional $\operatorname{ad} \mathfrak{k}$-modules. By exploiting the spectral theorem for normal operators (Remark \ref{remark_normal}) and the fact that $J$ must preserve the two $\operatorname{ad} \mathfrak{k}$-modules of $\mathfrak{h}_1$, as prescribed by the condition $[A,J_1]=0$, we can then assume, without loss of generality, that $B$ is of the form
\[
\left( \begin{smallmatrix} 0&0&0&0&0 \\ v_1&0&0&0&0 \\ v_2&0&0&s&0 \\ v_3&0&-s&0&0 \\ v_4&0&0&0&0 \end{smallmatrix} \right),
\]
with $s \neq 0$ and $v_1^2+v_4^2 \neq 0$ to ensure that the algebra is not isomorphic to $\mathfrak{k}_{15}^0$. Now,
\[
H=-v_1\,e^{126}-v_2\,e^{136}-v_3\,e^{146}-v_4\,e^{156}.
\] 
Imposing that $H$ is equal to $d\eta$ for some generic $2$-form $\eta=\sum_{j<k} \eta_{jk} e^{jk}$, one obtains that necessarily $\eta_{23}=\eta_{24}=\eta_{35}=\eta_{45}=0$. Then one has $d\eta(e_1,e_2,e_6)=\eta_{25}v_4$, $d\eta(e_1,e_5,e_6)=-\eta_{25}v_1$, which should be equal to $-v_1$ and $-v_4$ respectively: this is only possible if $v_1=v_4=0$, which contradicts our hypothesis.

If $\mathfrak{g} \cong 3\R \oplus \mathfrak{h}_3$, one necessarily has $a=0$, $A=0$, $v \neq 0$, and the claim follows by analogous computations.
\end{proof}

\section{Holomorphic  Poisson structures} \label{sec_HolP}

As remarked in Section \ref{sec_prelim}, holomorphic Poisson structures are a fundamental tool in the study  of generalized K\"ahler structures. For this reason, we focus on  almost abelian Lie groups admitting  left-invariant SKT structures, classifying the ones which also admit non-zero left-invariant holomorphic Poisson structures. For those which do not, we get an immediate obstruction to the existence of non-split generalized K\"ahler structures, while, for those which do admit them, we gain additional information about them. This provides an essential tool in the proof of Theorem \ref{th_GenKahler}.

Let $(J,g)$ be a  left-invariant Hermitian structure on a six-dimensional almost abelian Lie group $G$ and let $\mathfrak{g}$ the Lie algebra of $G$. Then, as in Section \ref{sec_SKT}, if we take a basis $\{e_1,\ldots,e_6\}$ adapted to the splitting $\mathfrak{g}=J \mathfrak{k} \oplus \mathfrak{h}_1 \oplus \mathfrak{k}$, the matrix $B$ corresponding to $\text{ad}_{e_6}\rvert_{\mathfrak{h}}$ will be of the form
\[
B=\begin{pmatrix} a & 0 \\ v & A \end{pmatrix},
\]
for some $a \in \R$, $v=v_1e_2+v_2e_3+v_3e_4+v_4e_5 \in \mathfrak{h}_1$, $A \in \mathfrak{gl}(\mathfrak{h}_1)$ such that  $[A,J_1]=0$, where  $J_1=J\rvert_{\mathfrak{h}_1}$. In what follows, we suppose without loss of generality that
\[
J_1=\left( \begin{smallmatrix} 0&0&0&-1\\0&0&-1&0 \\ 0&1&0&0 \\ 1&0&0&0 \end{smallmatrix} \right),
\]
which means that, in order to have $[A,J_1]=0$, $A$ must be of the form
\[
A=\left( \begin{smallmatrix} 
A_{11}&A_{12}&A_{13}&A_{14}\\
A_{21}&A_{22}&A_{23}&A_{24}\\
-A_{24}&-A_{23}&A_{22}&A_{21}\\
-A_{14}&-A_{13}&A_{12}&A_{11} \end{smallmatrix} \right), 
\]
for some $A_{ij} \in \R$, $i=1,2$, $j=1,\ldots,4$. We denote for simplicity
\[
w_1=A_{11}-iA_{14},\quad w_2=A_{12}-iA_{13},\quad w_3=A_{21}-iA_{24},\quad w_4=A_{22}-iA_{23}
\]
and
\[
\alpha=v_1 + iv_4,\quad \beta=v_2 + i v_3.
\]
At the Lie algebra level we have the splitting
\[
\mathfrak{g} \otimes \C = \mathfrak{g}^{1,0} \oplus \mathfrak{g}^{0,1}
\]
into the  $(\pm i)$-eigenspaces with respect to $J$. A basis of  $\mathfrak{g}^{1,0}$ is given by
\[
Z_1=e_1 - ie_6,\quad Z_2=e_2-ie_5,\quad Z_3=e_3 - i e_4,
\]
while their conjugates $\overline{Z}_i$, $i=1,2,3$, provide a basis of $\mathfrak{g}^{0,1}$.
Extending the Lie bracket of $\mathfrak{g}$ to $\mathfrak{g} \otimes \C$, one can compute
\begin{align*}
&[Z_1,Z_2]=-i\left( w_1Z_2 +w_3Z_3 \right), \\
&[Z_1,Z_3]=-i\left( w_2Z_2 + w_4 Z_3\right), \\
&[Z_1,\overline{Z}_1]=-i\left(aZ_1 +\alpha Z_2 + \beta Z_3 +a \overline{Z}_1 + \overline{\alpha} \overline{Z}_2 + \overline{\beta} \overline{Z}_3 \right), \\
&[Z_2,\overline{Z}_1]=-i\left( w_1Z_2+w_3Z_3\right), \\
&[Z_3,\overline{Z}_1]=-i\left( w_2Z_2+w_4Z_3\right).
\end{align*}
All the other brackets between the $Z_j$'s and/or the $\overline{Z}_j$'s vanish, apart from the ones obtained by conjugating the above expressions or exchanging entries.
Recall that we have
\[
\delb X = \sum_{j=1}^3 \overline{\alpha}^j \otimes \left[\overline{Z}_j,X\right]^{1,0} \quad  \in (\mathfrak{g}^{0,1})^* \otimes \mathfrak{g}^{1,0},
\]
for $X \in \mathfrak{g}^{1,0}$, where $\{\alpha^j\}_{j=1,2,3}$ denotes the basis of $(\mathfrak{g}^{1,0})^*$ dual to $\{Z_j\}_{j=1,2,3}$. Looking at the non-zero brackets, it follows that the image of $\delb$ lies in $\{\overline{\alpha}^1\} \otimes \mathfrak{g}^{1,0}$, so that we can reduce to studying $\delb_{\overline{Z}_1}$, which is an endomorphism of $\mathfrak{g}^{1,0}$: one has
\begin{align*}
\delb_{\overline{Z}_1} Z_1 &= i (a Z_1 + \alpha Z_2 + \beta Z_2),\\
\delb_{\overline{Z}_1} Z_2 &= i (w_1Z_2 + w_3 Z_3),\\
\delb_{\overline{Z}_1} Z_3 &= i (w_2 Z_2 + w_4 Z_3).
\end{align*}
The analogous is true also for the extension of $\delb$ to $\mathfrak{g}^{2,0}=\Lambda^2 \mathfrak{g}^{1,0}$, for which we can take the basis $\{Z_1 \wedge Z_2,Z_1 \wedge Z_3,Z_2 \wedge Z_3\}$. We have
\begin{align*}
\delb_{\overline{Z}_1}(Z_1 \wedge Z_2) &= i\left((a+w_1)Z_1 \wedge Z_2 + w_3 Z_1 \wedge Z_3 - \beta Z_2 \wedge Z_3\right),\\
\delb_{\overline{Z}_1}(Z_1 \wedge Z_3) &= i\left(w_2 Z_1 \wedge Z_2 + (a+w_4)Z_1 \wedge Z_3 + \alpha Z_2 \wedge Z_3\right), \\
\delb_{\overline{Z}_1}(Z_2 \wedge Z_3) &= i(w_1 + w_4)Z_2 \wedge Z_3,
\end{align*}
or, in matrix form,
\begin{equation} \label{delbar}
\delb_{\overline{Z}_1}=i\begin{pmatrix}
a+ w_1 & w_2 & 0 \\
w_3 & a+ w_4 & 0 \\
-\beta & \alpha & w_1 + w_4
\end{pmatrix}.
\end{equation}
Focusing now on the Schouten bracket $[\cdot,\cdot] \colon \mathfrak{g}^{2,0} \times \mathfrak{g}^{2,0} \to \mathfrak{g}^{3,0}$, we first notice that $\mathfrak{g}^{3,0} \cong \mathbb{C}$ via the linear map sending $Z_1 \wedge Z_2 \wedge Z_3$ into $1 \in \C$. This allows to identify the Schouten bracket of $(2,0)$-vectors with a complex-valued symmetric bilinear form on $\mathfrak{g}^{2,0}$: an explicit computation using formula \eqref{schoutenbracket} shows that its associated matrix in the basis $\{Z_1 \wedge Z_2,Z_1 \wedge Z_3,Z_2 \wedge Z_3\}$ is
\begin{equation} \label{schouten}
[\cdot,\cdot]=i\begin{pmatrix}
-2w_3 & w_1 - w_4 & 0 \\
w_1 - w_4 & 2w_2 & 0 \\
0 & 0 & 0
\end{pmatrix}.
\end{equation}
A holomorphic Poisson structure on $(\mathfrak{g},J)$ lies in the kernel of \eqref{delbar} and the isotropic cone of \eqref{schouten}.

\begin{theorem} \label{th_HolP}
Let  $G$  be a six-dimensional non-nilpotent almost abelian Lie group. Then $G$ admits a left-invariant  SKT structure $(J,g)$ with $J$ having non-trivial left-invariant holomorphic Poisson structures if and only if  its Lie algebra $\mathfrak{g}$ is isomorphic to one of the following:
\begin{itemize}
\item[]  $\mathfrak{k}_{11}^{p,0,0,1}=\left( pf^{16},f^{36},-f^{26},f^{56},-f^{46},0\right)$, \,  $p \neq 0$, \smallskip
\item[] $\mathfrak{k}_{13}=\left(f^{16},0,0,0,0,0\right)$, \smallskip
\item[]$\mathfrak{k}_{15}^0= \left(f^{26},-f^{16},0,0,0,0\right)$, \smallskip
\item[] $\mathfrak{k}_{23}^0=\left(f^{26},-f^{16},f^{46},0,0,0\right)$, \smallskip
\item[] $\mathfrak{k}_{25}^{0,0,1}= \left(f^{26},-f^{16},f^{46},-f^{36},0,0\right)$.
\end{itemize}
In particular, if $G$ is unimodular, then its Lie algebra has to be decomposable, being it isomorphic to either $\mathfrak{k}_{15}^0$, $\mathfrak{k}_{23}^0$ or $\mathfrak{k}_{25}^{0,0,1}$.
\end{theorem}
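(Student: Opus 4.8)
The plan is to exploit the explicit descriptions \eqref{delbar} and \eqref{schouten}. A non-trivial left-invariant holomorphic Poisson structure $\pi = x\,Z_1\wedge Z_2 + y\,Z_1\wedge Z_3 + z\,Z_2\wedge Z_3$ exists exactly when the system $\delb_{\overline{Z}_1}\pi=0$ and $[\pi,\pi]=0$ has a non-zero solution. Writing $W=\left(\begin{smallmatrix} w_1 & w_2 \\ w_3 & w_4\end{smallmatrix}\right)$ for the complex matrix of $A\rvert_{\mathfrak{g}^{1,0}\cap(\mathfrak{h}_1\otimes\C)}$, the kernel conditions coming from \eqref{delbar} read $(aI+W)\left(\begin{smallmatrix}x\\y\end{smallmatrix}\right)=0$ together with $-\beta x+\alpha y+\operatorname{tr}(W)z=0$, while $[\pi,\pi]=0$ is by \eqref{schouten} the single quadric $-w_3x^2+(w_1-w_4)xy+w_2y^2=0$.

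First I would isolate the purely linear-algebraic reduction, which is the computational core. A direct substitution shows that this Schouten quadric vanishes identically on any eigenvector $\left(\begin{smallmatrix}x\\y\end{smallmatrix}\right)$ of $W$ with eigenvalue $-a$, so the isotropy condition imposes nothing extra once the first two kernel equations hold. Splitting according to whether $\left(\begin{smallmatrix}x\\y\end{smallmatrix}\right)$ vanishes, one finds that a non-trivial $\pi$ exists if and only if either $\operatorname{tr}(W)=w_1+w_4=0$ (take $\pi=Z_2\wedge Z_3$) or $-a$ is an eigenvalue of $W$ (take $\left(\begin{smallmatrix}x\\y\end{smallmatrix}\right)$ to be a corresponding eigenvector and solve for $z$). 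In particular $v$, that is $\alpha$ and $\beta$, plays no role in this existence question.

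Next I would feed in the SKT hypothesis through Theorem \ref{SKT_bracket} and Remark \ref{remark_normal}: $A$ is normal and every eigenvalue of $A$ has real part in $\{0,-\tfrac a2\}$; the two eigenvalues of $W$ are two of the complexified eigenvalues of $A$, one from each $J_1$-invariant complex line, with freedom in the sign of the chosen purely imaginary eigenvalues and in which real eigenvalue of $\operatorname{ad}_{e_6}$ serves as $a$. Since $-a$ is real, the criterion $-a\in\operatorname{spec}(W)$ forces, via the spectral constraint, $a=0$ and $\det W=0$, so the two cases become $\operatorname{tr}(W)=0$ and $a=0,\ \det W=0$. I would then run through the SKT algebras of Theorem \ref{th_SKT} (the K\"ahler and the strictly SKT families), record the pairs $(a,\operatorname{spec}(W))$ permitted by SKT, and test the criterion. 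The mechanism producing the parameter restrictions is transparent: for a K\"ahler algebra whose $A$ has purely imaginary eigenvalues $\pm ib,\pm ic$, orienting the holomorphic lines so that $W$ has eigenvalues $ib,-ic$ makes $\operatorname{tr}(W)=0$ possible exactly when $|b|=|c|$, which is why $\mathfrak{k}_{11}^{p,0,0,s}$ and $\mathfrak{k}_{25}^{0,0,r}$ survive only for $|s|=1$ and $|r|=1$ (normalized to $1$); the unimodular $a=0$ algebras $\mathfrak{k}_{15}^0$ and $\mathfrak{k}_{23}^0$ pass through the second case since $W$ then carries a zero eigenvalue, and $\mathfrak{k}_{13}$ has $A=0$ hence $\operatorname{tr}(W)=0$.

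Finally, the ``in particular'' clause is immediate once the list is in hand: unimodularity is the scalar condition $a+\operatorname{tr}(A)=0$, which fails for $\mathfrak{k}_{11}^{p,0,0,1}$ and $\mathfrak{k}_{13}$ (there $a\neq0$ while $\operatorname{tr}(A)=0$) and holds for $\mathfrak{k}_{15}^0$, $\mathfrak{k}_{23}^0$, $\mathfrak{k}_{25}^{0,0,1}$, each of which visibly splits off an abelian factor and is therefore decomposable. I expect the main obstacle to be the bookkeeping of the case analysis: one must check that the SKT constraint genuinely pins down the admissible value(s) of $a$ for each algebra, ruling out choices of $J$ that would spuriously place a nonzero real eigenvalue into $W$, and that no algebra excluded from the final list admits \emph{any} alternative compatible complex structure meeting the criterion. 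Handling this freedom in $J$ uniformly, rather than algebra-by-algebra, is the delicate point.
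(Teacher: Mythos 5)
Your plan is correct, and it takes a genuinely different route from the paper. The paper proceeds via Lemma \ref{lemma_HolP}: it first treats the case where $\mathfrak{h}_1$ splits into two $J$-invariant $\operatorname{ad}\mathfrak{k}$-modules by computing $\det\delb_{\overline{Z}_1}=-i(a+w_1)(a+w_4)(w_1+w_4)$ in block-diagonal form and running three explicit subcases (which also produces the explicit Poisson bivectors, e.g.\ $Z_1\wedge Z_2+i\tfrac{\beta}{s}Z_2\wedge Z_3$, later reused in the proof of Theorem \ref{th_GenKahler}); it then shows algebra-by-algebra that the splitting hypothesis is forced, except for $\mathfrak{k}_{11}^{p,-\frac{p}{2},-\frac{p}{2},1}$, whose equivalent $\operatorname{ad}\mathfrak{k}$-modules need a separate determinant computation. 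Your route instead extracts a uniform criterion: the Schouten quadric $-w_3x^2+(w_1-w_4)xy+w_2y^2$ equals $-\det\bigl[\left(\begin{smallmatrix}x\\y\end{smallmatrix}\right)\,\big|\,W\left(\begin{smallmatrix}x\\y\end{smallmatrix}\right)\bigr]$ and hence vanishes on every eigenvector of $W$, so a non-trivial holomorphic Poisson structure exists iff $\operatorname{tr}(W)=0$ or $-a\in\operatorname{spec}(W)$ (equivalently $\det\delb_{\overline{Z}_1}=0$), independently of $v$; feeding in Theorem \ref{SKT_bracket} turns this into a purely spectral test on $(a,\operatorname{spec}(A))$ via the multiset identity $\operatorname{spec}(W)\sqcup\overline{\operatorname{spec}(W)}=\operatorname{spec}(A)$. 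I have checked that this test reproduces the paper's list, including the exclusion of $\mathfrak{k}_{11}^{p,-\frac{p}{2},-\frac{p}{2},s}$ with no special treatment of $s=1$ — the main place where your approach is cleaner than the paper's. Two points to tighten in a full writeup: state the invariant fact as the multiset identity above rather than ``one eigenvalue from each $J_1$-invariant line'' (which is only literally true when the two $\operatorname{ad}\mathfrak{k}$-modules are inequivalent), and, for the ``if'' direction, verify that each admissible choice of $\operatorname{spec}(W)$ is realized by an actual orthogonal $J_1$ commuting with the normal operator $A$ (standard, but it is the step the paper makes explicit in Lemma \ref{lemma_HolP}).
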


We first introduce a lemma.

\begin{lemma} \label{lemma_HolP}
Let $(\mathfrak{g},J,g)$ be a six-dimensional SKT non-nilpotent almost abelian Lie algebra. If $J$ admits holomorphic Poisson structures and $\mathfrak{h}_1$ splits into two mutually orthogonal $2$-dimensional $\operatorname{ad}\mathfrak{k}$-modules which are $J$-invariant, then $\mathfrak{g}$ is isomorphic to one of the algebras of Theorem \ref{th_HolP}.
\end{lemma}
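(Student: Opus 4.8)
The plan is to exploit the hypothesis that $\mathfrak{h}_1$ decomposes into two mutually orthogonal, $2$-dimensional, $\operatorname{ad}\mathfrak{k}$-invariant and $J$-invariant submodules. This decomposition places the matrix $A$ of $\operatorname{ad}_{e_6}\rvert_{\mathfrak{h}_1}$ into a block-diagonal form, where each $2\times 2$ block commutes with the restriction of $J_1$. Concretely, in the notation established before the theorem, each block is a normal operator (by the SKT hypothesis and Theorem \ref{SKT_bracket}) whose eigenvalues have real part $0$ or $-\frac{a}{2}$. First I would record what this forces on the parameters $w_1,w_2,w_3,w_4$ governing $A$: the off-diagonal coupling terms $w_2,w_3$ (which mix the two submodules) must vanish, so that $w_2=w_3=0$, and each diagonal block is determined by $w_1$ and $w_4$ respectively.

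**Using the holomorphic Poisson conditions.**
With $w_2=w_3=0$, the matrix \eqref{delbar} for $\delb_{\overline{Z}_1}$ becomes lower-triangular and \eqref{schouten} for the Schouten bracket simplifies drastically to a diagonal-type form depending only on $w_1-w_4$ (since the $-2w_3$ and $2w_2$ entries vanish). The next step is to impose that a nonzero holomorphic Poisson bivector $\pi$ exists, i.e.\ that $\pi$ lies simultaneously in $\ker \delb_{\overline{Z}_1}$ and in the isotropic cone of the Schouten form. I would analyze the Schouten form \eqref{schouten} in this reduced case: its isotropic cone consists of bivectors whose $Z_1\wedge Z_2$ and $Z_1\wedge Z_3$ components satisfy $(w_1-w_4)\,\xi_{12}\,\xi_{13}=0$, together with the free $Z_2\wedge Z_3$ direction. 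Combining this with the kernel condition from \eqref{delbar}, and recalling the eigenvalue constraint from the SKT condition (real parts in $\{0,-\tfrac{a}{2}\}$), I would enumerate the possible configurations of $w_1,w_4$ and of the translation data $\alpha,\beta$ (equivalently $v$).

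**Matching against the classification.**
Each surviving configuration of $(a,v,A)$ then corresponds, via the isomorphism classification of Theorem \ref{CPX} and the SKT list of Theorem \ref{th_SKT}(2) together with the K\"ahler list of Theorem \ref{th_SKT}(1), to one of the named Lie algebras. The final step is a bookkeeping argument: I would check that the algebras compatible with (i) the block-diagonal normal form, (ii) the SKT eigenvalue restriction, and (iii) the existence of a nonzero element in $\ker\delb_{\overline{Z}_1}\cap\{\text{isotropic cone}\}$ are exactly $\mathfrak{k}_{11}^{p,0,0,1}$, $\mathfrak{k}_{13}$, $\mathfrak{k}_{15}^0$, $\mathfrak{k}_{23}^0$ and $\mathfrak{k}_{25}^{0,0,1}$. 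In particular I expect the Poisson (isotropy) condition, rather than the holomorphicity (kernel) condition, to be the decisive cut that forces $|s|=1$ or $|r|=1$ in the relevant parameter-dependent families, collapsing them to the single listed representatives.

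**Main obstacle.**
The principal difficulty will be handling the $\mathfrak{k}_{23}^0$ case, where $a=0$ forces a nontrivial Jordan block in $B$ (as noted in the proof of Theorem \ref{th_SKT}), so $B$ is \emph{not} diagonalizable even though $A$ is normal. Here the two $\operatorname{ad}\mathfrak{k}$-modules structure of $\mathfrak{h}_1$ coexists with a nonzero translation vector $v$ coupling $Z_1$ to the rest, and I must verify directly that a nonzero Poisson bivector still survives in $\ker\delb_{\overline{Z}_1}$ along the $Z_2\wedge Z_3$ direction (which is automatically isotropic and, when $w_1+w_4=0$, also lies in the kernel). The remaining subtlety is ensuring that the continuous parameters are genuinely pinned down by the Poisson condition and that no additional algebra from Theorem \ref{th_SKT} slips through; this I would settle by directly substituting each candidate's structure constants into \eqref{delbar} and \eqref{schouten} and confirming the intersection is nontrivial precisely for the five listed algebras.
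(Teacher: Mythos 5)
Your strategy is essentially the paper's: use the hypothesis to put $A$ in block-diagonal normal form with $w_2=w_3=0$, reduce \eqref{delbar} and \eqref{schouten} accordingly, impose the existence of a nonzero element of $\ker\delb_{\overline{Z}_1}$ in the isotropic cone, and enumerate cases using the SKT constraint that the real parts $p,q$ of $w_1,w_4$ lie in $\{0,-\tfrac{a}{2}\}$. Two of your predictions about where the work happens are off, though. First, the decisive cut is the holomorphicity condition, not the Poisson one: writing $w_1=p-ir$, $w_4=q-is$, the equation $\det\delb_{\overline{Z}_1}=-i(a+w_1)(a+w_4)(w_1+w_4)=0$ splits into three cases, and in each the real and imaginary parts combined with $p,q\in\{0,-\tfrac{a}{2}\}$ force $p=q=0$ and either $r=0$, $s=0$, or $s=-r$; it is the factor $w_1+w_4=0$ (a kernel condition) that pins $|s|=|r|$ and hence collapses $\mathfrak{k}_{11}^{p,0,0,s}$ and $\mathfrak{k}_{25}^{0,0,r}$ to $s=1$, $r=1$. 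The isotropy condition does no cutting at all: with $w_2=w_3=0$ the Schouten form is $2i(w_1-w_4)\,\xi_{12}\xi_{13}$, and in every surviving case the kernel vectors have $\xi_{12}\xi_{13}=0$, so they are automatically Poisson. Second, in the case leading to $\mathfrak{k}_{23}^0$ and $\mathfrak{k}_{15}^0$ one has $w_1=0$, $w_4=-is$ with $s\neq0$, so $w_1+w_4\neq0$ and $Z_2\wedge Z_3$ is \emph{not} holomorphic; the kernel is spanned by $Z_1\wedge Z_2+i\tfrac{\beta}{s}Z_2\wedge Z_3$, which is where the nonzero $v$ enters (and it is still isotropic since its $Z_1\wedge Z_3$ component vanishes). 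Neither point breaks your plan --- a faithful execution of the computation you describe would surface both corrections --- but as stated the $\mathfrak{k}_{23}^0$ discussion would send you looking for the Poisson bivector in the wrong place.
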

\begin{proof}
With the notations we have introduced, let $\{e_1,\ldots,e_6\}$ be an orthonormal basis  of $(\mathfrak{g}, g)$ adapted to the splitting $\mathfrak{g}=J\mathfrak{k} \oplus \mathfrak{h}_1 \oplus \mathfrak{k}$ and such that $Je_1=e_6$, $Je_2=e_5$, $Je_3=e_4$.
By the spectral theorem (Remark \ref{remark_normal}) and by the assumption in the lemma, it is easy to see that we can assume that the matrix $B$ corresponding to $\text{ad}_{e_6}\rvert_{\mathfrak{h}}$ is of the form
\[
B=\left( \begin{smallmatrix}
a&0&0&0&0\\
v_1&p&0&0&r \\
v_2&0&q&s&0 \\
v_3&0&-s&q&0 \\
v_4&-r&0&0&p
\end{smallmatrix} \right),
\]
with $p,q \in \{0,-\frac{a}{2}\}$.
The matrices associated with $\delb_{\overline{Z}_1}$ and the Schouten bracket with respect to the induced basis for $\mathfrak{g}^{2,0}$ are given by
\[
\delb_{\overline{Z}_1}=i\begin{pmatrix}
a+ w_1 & 0 & 0 \\
0 & a+ w_4 & 0 \\
-\beta & \alpha & w_1 + w_4
\end{pmatrix},\quad [\cdot,\cdot]=i\begin{pmatrix}
0 & w_1 - w_4 & 0 \\
w_1 - w_4 & 0 & 0 \\
0 & 0 & 0
\end{pmatrix}.
\]
To ensure the existence of holomorphic $(2,0)$-vectors one needs 
\begin{align*}
0=\det \delb_{\overline{Z}_1} &=-i(a+w_1)(a+w_4)(w_1+w_4) \\
                              &=-i(a+p-ir)(a+q-is)(p+q-i(r+s)),
\end{align*}
so that we have three cases:

i) $a+p-ir=0$, that is, $r=0$ and $p=-a$. This implies $p=a=0$, so that
\[
B=\left( \begin{smallmatrix}
0&0&0&0&0& \\
v_1 &0&0&0&0 \\
v_2&0&0&s&0& \\
v_3&0&-s&0&0 \\
v_4&0&0&0&0
\end{smallmatrix}
\right),
\]
where $B$ denotes the matrix associated with $\text{ad}_{e_6}\rvert_{\mathfrak{h}}$ in the fixed basis, as usual.
In order for $\mathfrak{g}$ to be non-nilpotent, $s \neq 0$, so that
$\mathfrak{g}$ is isomorphic to either $\mathfrak{k}_{23}^0$ or $\mathfrak{k}_{15}^0$.
In our basis, holomorphic Poisson structures on $\mathfrak{g}$ exist and they are all multiples of
\[
Z_1 \wedge Z_2 + i \tfrac{\beta}{s} Z_2 \wedge Z_3.
\]

ii) $a+q-is=0$, that is, $s=0$, $q=-a$, which is analogous to the previous case after exchanging $e_2$ with $e_3$ and $e_4$ with $e_5$.

iii) $p+q - i (r+s)=0$, that is, $q=-p$, $s=-r$. The fact that $p$ and $q$ must be equal to either $0$ or $-\frac{a}{2}$ forces $q=p=0$. We then have
\[
B=\left( \begin{smallmatrix}
a&0&0&0&0& \\
v_1 &0&0&0&r \\
v_2&0&0&-r&0& \\
v_3&0&r&0&0 \\
v_4&-r&0&0&0
\end{smallmatrix}
\right),
\]
yielding $\mathfrak{g}$ isomorphic to either $\mathfrak{k}_{11}^{p,0,0,1}$, $\mathfrak{k}_{25}^{0,0,1}$ or $\mathfrak{k}_{13}$,
if $a,r \neq 0$, $a = 0$ and $r \neq 0$ or $a \neq 0$ and $r=0$, respectively ($a=r=0$ would imply $\mathfrak{g}$ nilpotent). In all three cases, holomorphic Poisson structures exist and they are multiples of $Z_2 \wedge Z_3$. 
\end{proof}

We have thus also proven that each of the algebras of the statement of Theorem \ref{th_HolP} admits SKT structures $(J,g)$ with $J$ admitting holomorphic Poisson structures. Moreover, such an SKT structure can be found so that $\mathfrak{h}_1$ splits into two $2$-dimensional $J$-invariant and $\operatorname{ad} \mathfrak{k}$-invariant subspaces.

\begin{proof}[Proof  of Theorem \ref{th_HolP}]
Looking at the remaining algebras in the statement, Lemma \ref{lemma_HolP} allows to  discard the algebras
\begin{alignat*}{3}
&\mathfrak{k}_{8}^{p,-\frac{p}{2},0},\qquad &
&\mathfrak{k}_{8}^{p,-\frac{p}{2},-\frac{p}{2}}, \qquad&
&\mathfrak{k}_{11}^{p,0,0,s}, \\ 
&\mathfrak{k}_{11}^{p,-\frac{p}{2},0,s},\qquad & &\mathfrak{k}_{11}^{p,-\frac{p}{2},-\frac{p}{2},s}, \qquad& &\mathfrak{k}_{17}^{-\frac{1}{2}}, \\
&\mathfrak{k}_{19}^{p,0}, \qquad& &\mathfrak{k}_{19}^{p,-\frac{p}{2}}, \qquad& &\mathfrak{k}_{25}^{0,0,r}, \\
\end{alignat*}
for $r,s \neq 1$. As a matter of fact, given any SKT structure $(J,g)$ on any of them, there must exist an orthonormal basis for $\mathfrak{h}_1$ such that, if $e_6$ is a unit norm generator of $\mathfrak{k}$, the matrix $A$ associated with $\text{ad}_{e_6}\rvert_{\mathfrak{h}_1}$ is respectively of the form
\begin{alignat*}{3}
&\left( \begin{smallmatrix} -\frac{a}{2} &0&0&0 \\ 0&-\frac{a}{2}&0&0 \\ 0&0&0&b \\ 0&0&-b&0 \end{smallmatrix} \right), \quad& 
&\left( \begin{smallmatrix} -\frac{a}{2} &0&0&0 \\ 0&-\frac{a}{2}&0&0 \\ 0&0&-\frac{a}{2}&b \\ 0&0&-b&-\frac{a}{2} \end{smallmatrix} \right), \quad&
&\left( \begin{smallmatrix} 0&b&0&0 \\ -b&0&0&0 \\ 0&0&0&c \\ 0&0&-c&0 \end{smallmatrix} \right), \\
&\left( \begin{smallmatrix} -\frac{a}{2} &b&0&0 \\ -b&-\frac{a}{2}&0&0 \\ 0&0&0&c \\ 0&0&-c&0 \end{smallmatrix} \right), \quad& 
&\left( \begin{smallmatrix} -\frac{a}{2} &b&0&0 \\ -b&-\frac{a}{2}&0&0 \\ 0&0&-\frac{a}{2}&c \\ 0&0&-c&-\frac{a}{2} \end{smallmatrix} \right), \quad&
&\left( \begin{smallmatrix} -\frac{a}{2} &0&0&0 \\ 0&-\frac{a}{2}&0&0 \\ 0&0&0&0 \\ 0&0&0&0 \end{smallmatrix} \right), \\
&\left( \begin{smallmatrix} 0 &b&0&0 \\ -b&0&0&0 \\ 0&0&0&0 \\ 0&0&0&0 \end{smallmatrix} \right), \quad& 
&\left( \begin{smallmatrix} -\frac{a}{2} &b&0& 0\\ -b&-\frac{a}{2}&0&0 \\ 0&0&0&0 \\ 0&0&0&0 \end{smallmatrix} \right), \quad& 
&\left( \begin{smallmatrix} 0&b&0&0 \\ -b&0&0&0 \\0&0&0&c\\ 0&0&-c&0 \end{smallmatrix} \right).
\end{alignat*}
for some $a,b,c \in \R-\{0\}$, $b \neq \pm c$. It is then immediate to see that $\text{span}\left<e_2,e_3\right>$ and $\text{span}\left<e_4,e_5\right>$ are non-equivalent orthogonal $\operatorname{ad}\mathfrak{k}$-modules. The condition $[A,J_1]=0$, $J_1=J\rvert_{\mathfrak{h}_1}$, then forces these two modules to be $J$-invariant. Being these algebras not isomorphic to the ones of Lemma \ref{lemma_HolP}, we conclude that, for any SKT structure $(J,g)$ on them, the corresponding $J$ does not admit holomorphic Poisson structures.

Now, the algebra $\mathfrak{k}_1^{-\frac{1}{2},-\frac{1}{2}}$,
too, does not admit holomorphic Poisson structures: for any basis of $\mathfrak{h}_1$, we have that $A=-\frac{a}{2}\text{Id}$, $a \neq 0$. Thus, every $2$-dimensional $J$-invariant subspace of $\mathfrak{h}_1$ is trivially $\operatorname{ad} \mathfrak{k}$-invariant and Lemma \ref{lemma_HolP} applies.

The only remaining algebra is
$\mathfrak{k}_{11}^{p,-\frac{p}{2},-\frac{p}{2},1}$: let $(J,g)$ be an SKT structure on it. Then, by the spectral theorem (Remark \ref{remark_normal}), with respect to some orthonormal basis $\{e_1,\ldots,e_6\}$ adapted to the splitting $J\mathfrak{k} \oplus \mathfrak{h}_1 \oplus \mathfrak{k}$ we have that the matrix $A$ corresponding to $\text{ad}_{e_6}\rvert_{\mathfrak{h}_1}$ is of the form
\[
A=\left( \begin{smallmatrix} -\frac{a}{2} &r&0&0 \\ -r&-\frac{a}{2}&0&0 \\ 0&0&-\frac{a}{2}&r \\ 0&0&-r&-\frac{a}{2} \end{smallmatrix} \right),
\]
for $a=g([e_6,e_1],e_1) \neq 0$ and $r \neq 0$. Thus $\mathfrak{h}_1 = \text{span}\left<e_2, e_3, e_4, e_5 \right>$ splits into two mutually orthogonal $2$-dimensional equivalent $\operatorname{ad} \mathfrak{k}$-modules $\mathfrak{m}_1=\text{span}\left<e_2,e_3\right>$ and $\mathfrak{m}_2=\text{span}\left<e_4,e_5\right>$. By Lemma \ref{lemma_HolP}, $J_1$ cannot preserve these two modules, but then $\mathfrak{m}_1 \oplus J\mathfrak{m}_1 =\mathfrak{h}_1$ and, replacing $e_4$ and $e_5$ with $e_4^\prime=Je_3$ and $e_5^\prime=Je_2$, respectively, one obtains that the matrix $B$ associated with $\text{ad}_{e_6}\rvert_{\mathfrak{h}}$, with respect to the basis $\{e_1,e_2,e_3,e_4^\prime,e_5^\prime\}$, is of the form
\[
B=\left( \begin{smallmatrix}
a&0&0&0&0 \\
v_1&-\frac{a}{2}&r&0&0 \\
v_2&-r&-\frac{a}{2}&0&0 \\
v_3&0&0&-\frac{a}{2}&-r\\
v_4&0&0&r&-\frac{a}{2} \end{smallmatrix} \right).
\]
We have $Je_1=e_6$, $Je_2=e_5^\prime$, $Je_3=e_4^\prime$, so that we may directly apply the discussion at the beginning of this section, obtaining that the matrix associated with $\delb_{\overline{Z}_1}$ with respect to the induced basis for $(2,0)$-vectors is of the form
\[
\delb_{\overline{Z}_1}=i\begin{pmatrix}
\frac{a}{2} & r & 0 \\
-r & \frac{a}{2} & 0 \\
-\beta & \alpha & -a
\end{pmatrix}.
\]
Then
\[
\det \delb_{\overline{Z}_1}=-ia\left(\tfrac{a^2}{4}+r^2\right)\neq 0,
\]
It thus follows that there are no holomorphic $(2,0)$-vectors, hence no holomorphic Poisson structures. This concludes the proof of the theorem.
\end{proof}

\begin{example}
The three unimodular almost abelian Lie groups of Theorem \ref{th_HolP} admit compact quotients by lattices: the compact solvmanifolds obtained from $\mathfrak{k}_{15}^0$ and $\mathfrak{k}_{25}^{0,0,1}$ are K\"ahler and appear in \cite{Has1}: the former corresponds to the product between a hyperelliptic surface and a $2$-torus, while the latter is described in \cite[Example 4]{Has1} as a natural generalization of hyperelliptic surfaces. A lattice on the group corresponding to $\mathfrak{k}_{23}^0$ is given in \cite[Proposition 7.2.7]{Bock}. Therefore, we have obtained three examples of  compact  solvmanifolds admitting SKT structures  and non-trivial invariant holomorphic Poisson structures.
\end{example}

As we have just proved, not all left-invariant SKT structures $(J,g)$ on a six-dimensional almost abelian Lie group are such that $J$ admits non-trivial holomorphic Poisson structures. This constitutes a radical difference with respect to the six-dimensional nilpotent case, treated in \cite{FPS}. As we have already recalled in Remark \ref{SKT_nilp}, the SKT condition for a left-invariant Hermitian structure $(J,g)$ on a six-dimensional nilpotent Lie group $N$  depends solely on the complex structure: by the characterization of \cite[Theorem 1.2]{FPS}, a left-invariant complex structure $J$ on $N$ is SKT if and only $(\mathfrak{n}^{1,0})^*$  admits a basis $\{\alpha_1,\alpha_2,\alpha_3\}$ such that $d\alpha^1=d\alpha^2=0$ and $d\alpha^3$ satisfies some further conditions. Denoting by $\{Z_1,Z_2,Z_3\}$ its dual basis for $\mathfrak{n}^{1,0}$, we obtain that
\[
[\mathfrak{n}_{\C},\mathfrak{n}_{\C}] \subset \text{span}\left<Z_3,\overline{Z}_3\right> \subset \mathfrak{z}(\mathfrak{n}_{\C}),
\]
where $\mathfrak{n}_{\C}=\mathfrak{n} \otimes \C$  and $\mathfrak{z}(\mathfrak{n}_{\C})$ denotes the center of $\mathfrak{n}_{\C}$. Using these relations, one can easily obtain that $X \wedge Z_3$ is a holomorphic Poisson structure for any $X \in \mathfrak{n}^{1,0}$.

\section{Generalized K\"ahler structures on six-dimensional almost abelian Lie groups}

In this Section we study the existence of  left-invariant generalized K\"ahler structures on $6$-di\-men\-sion\-al almost abelian Lie groups.

We first focus on the non-split case, i.e., on generalized K\"ahler structures $(J_+,J_-,g)$ with $[J_+,J_-]\neq 0$. As recalled in Section \ref{sec_prelim}, such generalized K\"ahler structures give rise to a non-trivial holomorphic Poisson structure with respect to $J_\pm$. Going back to Theorem \ref{th_HolP}, we notice that, if $G$ is a six-dimensional non-nilpotent almost abelian Lie group not admitting left-invariant K\"ahler structures but admitting left-invariant SKT structures with non-trivial left-invariant holomorphic Poisson structures, then its Lie algebra has to be isomorphic to $\mathfrak{k}_{23}^0$. This fact simplifies the proof of our next result:

\begin{theorem} \label{th_GenKahler}
Let $G$ be a six-dimensional almost abelian Lie group  not admitting left-invariant K\"ahler structures. 
Then $G$ does not admit any non-split left-invariant generalized K\"ahler structures.
\end{theorem}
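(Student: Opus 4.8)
The plan is to combine the holomorphic-Poisson obstruction with the classifications already in hand, reducing the non-split case to a single Lie algebra, and then to rule that one out directly. I would first recall that a non-split generalized K\"ahler structure $(J_+,J_-,g)$ consists of two SKT structures $(J_\pm,g)$ with $[J_+,J_-]\neq 0$, and that by Hitchin's result (\cite[Proposition 5]{Hit}, recalled at the end of Section \ref{sec_prelim}) the real bivector $[J_+,J_-]g^{-1}$ has non-trivial $(2,0)$-part with respect to $J_+$, which is a holomorphic Poisson structure. Hence $J_+$, and likewise $J_-$, admits a non-trivial left-invariant holomorphic Poisson structure. If $G$ is non-nilpotent, Theorem \ref{th_HolP} forces $\mathfrak{g}$ to be one of $\mathfrak{k}_{11}^{p,0,0,1}$, $\mathfrak{k}_{13}$, $\mathfrak{k}_{15}^0$, $\mathfrak{k}_{23}^0$, $\mathfrak{k}_{25}^{0,0,1}$; comparing with the K\"ahler list of Theorem \ref{th_SKT}(1) (where $\mathfrak{k}_{11}^{p,0,0,1}$ and $\mathfrak{k}_{25}^{0,0,1}$ appear as the members $s=1$ and $r=1$ of K\"ahler families, and $\mathfrak{k}_{13},\mathfrak{k}_{15}^0$ appear outright), the non-K\"ahler hypothesis eliminates all of them except $\mathfrak{k}_{23}^0$. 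If instead $G$ is nilpotent, a generalized K\"ahler structure would in particular require $\mathfrak{g}$ to carry an SKT complex structure, so $\mathfrak{g}\cong 3\R\oplus\mathfrak{h}_3$, the unique SKT almost abelian nilpotent algebra recalled in Section \ref{sec_SKT}; being a non-toral nilpotent Lie algebra, it admits no invariant generalized K\"ahler structure by \cite{Cav}. Everything thus reduces to showing that $\mathfrak{k}_{23}^0$ carries no non-split generalized K\"ahler structure.

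For $\mathfrak{k}_{23}^0$ I would argue by contradiction, assuming $(J_+,J_-,g)$ is non-split. Since $(J_+,g)$ is SKT and admits holomorphic Poisson structures, case i) in the proof of Lemma \ref{lemma_HolP} applies: in a suitable $J_+$-adapted orthonormal basis the matrix $B_+=\operatorname{ad}_{e_6}\rvert_{\mathfrak{h}}$ has the displayed degenerate form, and the holomorphic Poisson structures with respect to $J_+$ are exactly the multiples of the \emph{decomposable} bivector $Z_1\wedge Z_2+i\tfrac{\beta}{s}Z_2\wedge Z_3=(Z_1-i\tfrac{\beta}{s}Z_3)\wedge Z_2$. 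The crucial structural observation is that $Z_2=e_2-ie_5$ spans, together with its conjugate, the centre $\mathfrak{z}=\langle e_2,e_5\rangle$ of $\mathfrak{k}_{23}^0$ (this is $\ker\operatorname{ad}_{e_6}\rvert_{\mathfrak{h}}$ for $s\neq 0$). Consequently $[J_+,J_-]g^{-1}$, whose $(2,0)_{J_+}$-part is a non-zero multiple of this decomposable bivector, has image a fixed $4$-dimensional subspace $V$ containing $\mathfrak{z}$, and $V$ must be invariant under \emph{both} $J_+$ and $J_-$, since a real bivector of type $(2,0)+(0,2)$ with respect to a complex structure has invariant support.

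I would then impose that $J_-$ is simultaneously a $g$-orthogonal integrable complex structure making $(J_-,g)$ SKT, with $V$ preserved, and that the Bismut torsions satisfy $d^c_+\omega_++d^c_-\omega_-=0$. Using the explicit torsion formula from the proof of Proposition \ref{prop_exact} to express both $H_\pm$ in a common frame, the opposite-torsion condition, combined with the requirement that $J_-$ also admit holomorphic Poisson structures of the same decomposable type (again via Lemma \ref{lemma_HolP}), should force $[J_+,J_-]=0$, contradicting non-splitness.

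The main obstacle I anticipate is precisely this last step: $J_+$ and $J_-$ have, a priori, distinct adapted frames, so one cannot bring both $B_+$ and $B_-$ to canonical form at once, and the core of the argument is to exploit the canonical role of the centre $\mathfrak{z}\subset V$ together with the rank-$4$ degeneracy forced on $[J_+,J_-]$ to cut the problem down to a tractable algebraic system and verify that its only solutions are the split ones. A cleaner variant worth attempting is to show that the shared invariant subspace $V$ and the centre constrain $J_-$ so tightly that $Q=J_+J_-$ must have a $(\pm1)$-eigenspace filling out the complement of $V$, which would already preclude $[J_+,J_-]$ from being non-degenerate on $V$.
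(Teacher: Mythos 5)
Your reduction is exactly the paper's: a non-split structure forces, via Hitchin's result, a non-trivial left-invariant holomorphic Poisson structure for $J_+$, so Theorem \ref{th_HolP} together with the K\"ahler list in Theorem \ref{th_SKT}(1) leaves only $\mathfrak{k}_{23}^0$ in the non-nilpotent case, and the nilpotent case is disposed of by \cite{Cav}. Up to this point the argument is correct and complete. The genuine gap is that the elimination of $\mathfrak{k}_{23}^0$ --- which is where essentially all the work of the theorem lies --- is not carried out: you write that the opposite-torsion condition together with the Poisson constraint ``should force $[J_+,J_-]=0$'' and you yourself flag this as the main anticipated obstacle. The paper closes this step by a concrete computation: it observes that $[J_+,J_-]g^{-1}$ must be a real multiple of the real or imaginary part of the explicit Poisson bivector \eqref{holPoisson}, hence $[J_+,J_-]$ must be proportional to one of two explicit skew-symmetric endomorphisms $\phi_1,\phi_2$; it then writes the generic $g$-skew-symmetric $J_-$, kills the entries dictated by the zeros of $\phi_1$, and derives a contradiction from specific components of the Nijenhuis tensor of $J_-$, the relation $J_-^2=-\mathrm{Id}$, and the vanishing of $H_++H_-$ (using $v_1^2+v_4^2\neq 0$, which encodes that the algebra is $\mathfrak{k}_{23}^0$ rather than $\mathfrak{k}_{15}^0$). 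Without some version of this computation your argument does not conclude.

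Two further cautions about the route you sketch. First, your ``cleaner variant'' aims to show that $[J_+,J_-]$ cannot be \emph{non-degenerate} on the $4$-dimensional subspace $V$, but non-splitness only requires $[J_+,J_-]\neq 0$, so degeneracy on $V$ would not by itself yield the contradiction; note also that the real or imaginary part of a decomposable complex bivector can have rank $2$ rather than $4$, so the support $V$ need not coincide with the image of $[J_+,J_-]g^{-1}$. Second, the difficulty you correctly identify --- that $J_+$ and $J_-$ have distinct adapted frames --- is precisely what the paper's method sidesteps: one fixes the $J_+$-adapted frame once and for all and treats $J_-$ as a generic $g$-orthogonal almost complex structure in that frame, constrained only by the proportionality of $[J_+,J_-]$ to $\phi_1$ or $\phi_2$; no canonical form for $B_-$ is ever needed. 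Adopting that viewpoint would let you complete your proof along the lines you intend.
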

\begin{proof}
The claim is true in the nilpotent case \cite{Cav}, since a nilpotent Lie algebra  does not admit any generalized K\"ahler structures. If $G$ is non-nilpotent and has  a non-split left-invariant generalized K\"ahler structure, then, by Theorem \ref{th_HolP}, its Lie algebra $\mathfrak{g}$ is isomorphic to $\mathfrak{k}_{23}^0$.

We start from a generic SKT structure $(J_+,g)$ on  $\mathfrak{g} \cong \mathfrak{k}_{23}^0$: by the same arguments we used in the proof of Proposition \ref{prop_exact} there exists an orthonormal basis $\{e_1,\ldots,e_6\}$ of $(\mathfrak{g}, g)$  adapted to the splitting $\mathfrak{g}=J_+\mathfrak{k} \oplus \mathfrak{h}_1 \oplus \mathfrak{k}$ such that $J_+e_1=e_6$, $J_+e_2=e_5$, $J_+e_3=e_4$ and the matrix $B$ associated with $\text{ad}_{e_6}\rvert_{\mathfrak{h}}$ is of the form
\[
B=\left( \begin{smallmatrix} 0&0&0&0&0 \\ v_1&0&0&0&0 \\ v_2&0&0&s&0 \\ v_3&0&-s&0&0 \\ v_4&0&0&0&0 \end{smallmatrix} \right),
\]
for some $s \in \R-\{0\}$, $v_i \in \R$, $i=1,\ldots,4$, with $v_1^2+v_4^2\neq 0$. By our previous discussion, we know that holomorphic Poisson structures with respect to $J_+$ form a line in $\mathfrak{g}^{2,0}$ generated by
\[
Z_1 \wedge Z_2 + \tfrac{i(v_2+iv_3)}{s} Z_2 \wedge Z_3,
\]
that is,
\begin{equation} \label{holPoisson}
(e_{12}+e_{56})+\tfrac{v_3}{s}(-e_{23}-e_{45})+\tfrac{v_2}{s}(e_{24}-e_{35})+i\big( e_{26}-e_{15} + \tfrac{v_2}{s}(e_{23}+e_{45})+\tfrac{v_3}{s}(e_{24}-e_{35})\big),
\end{equation}
where $e_{ij}=e_i \wedge e_j$. If we assume there exists a complex structure $J_-$ on $\mathfrak{g}$ such that $(J_+,J_-,g)$ is a generalized K\"ahler structure, then $[J_+,J_-]g^{-1} \in \mathfrak{g}^{2,0}\oplus \mathfrak{g}^{0,2}$ should be equal to a (real) multiple of the real or imaginary part of \eqref{holPoisson}. Exploiting the fact that the basis $\{e_1,\ldots,e_6\}$ is orthonormal we then get that $[J_+,J_-] \in \mathfrak{so}(\mathfrak{g},g)$ should be a multiple of the endomorphism
\[
\phi_1 =\left( \begin{smallmatrix} 
0&s&0&0&0&0\\
-s&0&-v_3&v_2&0&0 \\
0&v_3&0&0&-v_2&0 \\
0&-v_2&0&0&-v_3&0\\
0&0&v_2&v_3&0&s \\
0&0&0&0&-s&0
 \end{smallmatrix}\right) \quad \text{or} \quad 
\phi_2 =\left( \begin{smallmatrix} 
0&0&0&0&-s&0\\
0&0&v_2&v_3&0&s \\
0&-v_2&0&0&-v_3&0 \\
0&-v_3&0&0&v_2&0\\
s&0&v_3&-v_2&0&0 \\
0&-s&0&0&0&0
 \end{smallmatrix} \right).
\]
We proceed in this way: we write the generic skew-symmetric $J_-$ in the fixed orthonormal basis and impose that $[J_+,J_-]$ is a multiple of $\phi_1$ or $\phi_2$. Then we impose $J_-^2=-\text{Id}$, the integrability of $J_-$ and the generalized K\"ahler compatibility condition with $J_+$, $d^c_+\omega_+ + d^c_- \omega_-=0$. Then, one obtains that all these conditions are incompatible, so that, by arbitrariness of $J_+$, no generalized K\"ahler structure exists.

We provide details only for the case where $[J_+,J_-]$ is multiple of $\phi_1$, since for the other one the discussion is analogous.
Recall that the integrability of $J_-$ corresponds to the vanishing of the Nijenhuis tensor $N^{J_-} \in \Lambda^2 \mathfrak{g}^* \otimes \mathfrak{g}$, here regarded as a $(0,3)$-tensor $N^{J_-} \in \Lambda^2\mathfrak{g}^* \otimes \mathfrak{g}^*$ with the aid of the metric $g$, by $N^{J_-}(X,Y,Z)\coloneqq g(N^{J_-}(X,Y),Z)$, $X,Y,Z \in \mathfrak{g}$.
Now, the generic skew-symmetric $J_-$ is of the form $J_-=\sum_{j<k} J_{jk}(e^k \otimes e_j - e^j \otimes e_k)$. We then compute $[J_+,J_-]$ and set $J_{36}=J_{14}$, $J_{46}=-J_{13}$ and $J_{56}=-J_{12}$ to kill the desired entries corresponding to the zeros in $\phi_1$. Then we have $N^{J_-}(e_3,e_4,e_6)=s(J_{13}^2+J_{14}^2)$, which forces $J_{13}=J_{14}=0$.

Now, $N^{J_-}(e_3,e_6,e_4)=-s(J_{34}^2-1)$, together with
\begin{alignat*}{2} &N^{J_-}(e_3,e_6,e_2)=sJ_{23}J_{34}, \quad & &N^{J_-}(e_4,e_6,e_2)=sJ_{24}J_{34}, \\ &N^{J_-}(e_4,e_6,e_5)=-sJ_{34}J_{45},\quad & &N^{J_-}(e_3,e_6,e_5)=-sJ_{34}J_{35},
\end{alignat*} imposes $J_{23}=J_{24}=J_{35}=J_{45}=0$. 

Denoting $H_{\pm}=-J_\pm d \omega_{\pm}$, a computation yields
\[
(H_++H_-)(e_1,e_3,e_6)=-v_2(J_{16}^2J_{34}^2+1),\quad (H_++H_-)(e_1,e_4,e_6)=-v_3(J_{16}^2J_{34}^2+1),
\]
whose vanishing forces $v_2=v_3=0$.

We now assume $J_{12}=0$. Recalling that $v_1^2+v_4^2 \neq 0$, we have that
\[
N^{J_-}(e_1,e_6,e_2)=v_1(J_{16}^2-1),\quad N^{J_-}(e_1,e_6,e_5)=v_4(J_{16}^2-1),
\]
together with
\begin{alignat*}{2} &N^{J_-}(e_1,e_5,e_2)=v_1J_{15}J_{16}, \quad & &N^{J_-}(e_1,e_5,e_5)=v_4J_{15}J_{16}, \\ &N^{J_-}(e_2,e_6,e_2)=v_1J_{26}J_{16}, \quad & &N^{J_-}(e_2,e_6,e_5)=v_4J_{26}J_{16},
\end{alignat*}
imply $J_{15}=J_{26}=0$. At this point,
\[
H_++H_-=-v_1(J_{16}^2J_{25}^2+1)e^{126} - v_4(J_{16}^2J_{25}^2+1)e^{156},
\]
which can never vanish by our hypotheses.

Let us assume $J_{12} \neq 0$, instead. Noticing that
\[
(J_-^2)_{15}=J_{12}(J_{16}+J_{25}),\quad (J_-^2)_{16}=J_{12}(J_{26}-J_{15}),
\]
we must have $J_{25}=-J_{16}$, $J_{26}=J_{15}$, but now the vanishing of
\[
N^{J_-}(e_2,e_5,e_2)=v_1(J_{12}^2+J_{15}^2),\quad N^{J_-}(e_2,e_5,e_5)=v_4(J_{12}^2+J_{15}^2)
\]
produces a contradiction.
\end{proof}

Having discussed the non-split case, we now examine split generalized K\"ahler structures which, we recall, are those whose complex structures $J_+$ and $J_-$ commute.

\begin{theorem} \label{th_SGK}
Let $G$ be six-dimensional almost abelian Lie group.  Then  $G$ admits a left-invariant  split generalized K\"ahler structure,  but no  left-invariant K\"ahler structures, if and only if  its Lie algebra $\mathfrak{g}$ is isomorphic to one of the following:
\begin{itemize}
\setlength{\itemindent}{-1em}
\item[]  $\mathfrak{k}_1^{-\frac{1}{2},-\frac{1}{2}}=(f^{16},-\frac{1}{2} f^{26}, -\frac{1}{2} f^{36}, -\frac{1}{2} f^{46}, -\frac{1}{2} f^{56},0)$, \smallskip
\item[] $\mathfrak{k}_{8}^{p,-\frac{p}{2},0}=(p f^{16}, -\frac{p}{2} f^{26}, -\frac{p}{2} f^{36},f^{56},-f^{46},0)$, \,  $p \neq 0$, \smallskip
\item[]  $\mathfrak{k}_{8}^{p,-\frac{p}{2},-\frac{p}{2}}=(p f^{16}, -\frac{p}{2} f^{26}, -\frac{p}{2} f^{36},-\frac{p}{2}f^{46} + f^{56},-f^{46}-\frac{p}{2}f^{56},0)$, \, $p \neq 0$, \smallskip
\item[] $\mathfrak{k}_{11}^{p,-\frac{p}{2},0,s}=(p f^{16}, -\frac{p}{2}f^{26} +f^{36}, -f^{26}-\frac{p}{2}f^{36},sf^{56},-sf^{46},0)$, \, $p \neq 0$, $1 \geq |s| > 0$, \smallskip
\item[]  $\mathfrak{k}_{11}^{p,-\frac{p}{2},-\frac{p}{2},s}=(p f^{16}, -\frac{p}{2}f^{26} +f^{36}, -f^{26}-\frac{p}{2}f^{36},-\frac{p}{2}f^{46}+sf^{56},-sf^{46}-\frac{p}{2}f^{56},0)$, \, $\text{$p \neq 0$, $1 \geq |s| > 0$}$,\smallskip
\item[] $\mathfrak{k}_{17}^{-\frac{1}{2}}= (f^{16},-\frac{1}{2} f^{26}, -\frac{1}{2} f^{36},0,0,0)$, \smallskip
\item[] $\mathfrak{k}_{19}^{p,-\frac{p}{2}} =(pf^{16},-\frac{p}{2} f^{26} + f^{36}, -f^{26} -\frac{p}{2} f^{36},0,0,0)$, \, $p \neq 0$.
\end{itemize}
Among them, only $\mathfrak{k}_{8}^{p,-\frac{p}{2},0}$, $\mathfrak{k}_{11}^{p,-\frac{p}{2},0,s}$, $\mathfrak{k}_{17}^{-\frac{1}{2}}$ and $\mathfrak{k}_{19}^{p,-\frac{p}{2}}$ are unimodular.
\end{theorem}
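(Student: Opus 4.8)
The plan is to combine the two classification results already in hand. By definition, a split generalized K\"ahler structure $(J_+,J_-,g)$ is a pair of commuting complex structures sharing a Hermitian metric $g$ such that $(J_+,g)$ and $(J_-,g)$ are both SKT with opposite Bismut torsion, $H_+=-H_-$. In particular the Lie algebra admits SKT structures and, by assumption, no K\"ahler ones, so by Theorem \ref{th_SKT}(2) it is one of the eight algebras listed there: the seven in the statement together with $\mathfrak{k}_{23}^0$. Since Theorem \ref{th_GenKahler} shows that every generalized K\"ahler structure on such a group is automatically split, the theorem reduces to two tasks: producing a split generalized K\"ahler structure on each of the seven algebras, and excluding $\mathfrak{k}_{23}^0$.

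For the constructions I would exploit the commuting condition structurally. Since $J_\pm$ commute and are $g$-orthogonal, $Q=J_+J_-$ is a $g$-symmetric involution commuting with $J_+$, hence induces a $J_+$-invariant, $g$-orthogonal splitting $\mathfrak{g}=\mathfrak{g}_+\oplus\mathfrak{g}_-$ on which $J_-=\pm J_+$; conversely any such splitting defines a commuting $J_-$. The key observation is that each of the seven algebras carries the SKT structure $(J_+,g)$ of \eqref{example1}, whose algebraic data satisfy $v=0$ and $a\neq0$; consequently $de^{16}=0$ and $d\omega_+=\gamma\wedge e^6$ with $\gamma\in\Lambda^2\mathfrak{h}_1^*$ not involving $e^1$, so each monomial of $d\omega_+$ has exactly one leg in $\operatorname{span}\langle e_1,e_6\rangle$, namely $e^6$. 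I would then take the uniform splitting $\mathfrak{g}_-=\mathfrak{k}\oplus J_+\mathfrak{k}=\operatorname{span}\langle e_1,e_6\rangle$, i.e.\ $J_-$ equal to $J_+$ on $\mathfrak{h}_1$ and to $-J_+$ on $\operatorname{span}\langle e_1,e_6\rangle$. Because $a$, $v=0$ and $A$ are unchanged in the $J_-$-adapted basis, \cite{LR} gives integrability of $J_-$ and Theorem \ref{SKT_bracket} the SKT condition; and a direct check, using $d\omega_-=d\omega_+$ together with the fact that $J_-$ reverses $J_+$ exactly on $\operatorname{span}\langle e_1,e_6\rangle$, gives $H_-=d^c_-\omega_-=-d^c_+\omega_+=-H_+$. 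This produces the required structure on all seven algebras at once.

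To exclude $\mathfrak{k}_{23}^0$ I would argue as in the proof of Theorem \ref{th_GenKahler}. Any Hermitian structure on $\mathfrak{k}_{23}^0$ has $a=0$, because $a$ is a real eigenvalue of $\operatorname{ad}_{e_6}\rvert_{\mathfrak{h}}$ and the only real eigenvalue of the latter is $0$; hence a generic SKT structure $(J_+,g)$ can be put in the normal form of Proposition \ref{prop_exact}, with $s\neq0$ and $v_1^2+v_4^2\neq0$, and its torsion $H_+=-v_1e^{126}-v_2e^{136}-v_3e^{146}-v_4e^{156}$ is nonzero, every monomial now involving \emph{both} $e^1$ and $e^6$ (two legs in $\operatorname{span}\langle e_1,e_6\rangle$), which is exactly why the flip on $\operatorname{span}\langle e_1,e_6\rangle$ here preserves rather than reverses the torsion. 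A split partner corresponds to a $J_+$-invariant, $g$-orthogonal splitting $\mathfrak{g}=\mathfrak{g}_+\oplus\mathfrak{g}_-$ with $J_-=\pm J_+$ on $\mathfrak{g}_\pm$; writing the generic skew-symmetric $J_-$ commuting with $J_+$ and imposing $J_-^2=-\operatorname{Id}$, integrability, and $H_+=-H_-$, I would derive a contradiction from $v_1^2+v_4^2\neq0$. Concretely, $J_-^*$ differs from $J_+^*$ only by a sign on the directions spanning $\mathfrak{g}_-$, so a direct computation of $H_-$ shows that each admissible splitting either breaks the integrability of $J_-$ or yields $H_-=H_+$; since $H_+\neq0$, the opposite-torsion condition can never hold. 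Finally, the unimodularity assertion follows from $\operatorname{tr}\operatorname{ad}_{e_6}=\operatorname{tr}B$, exactly as in Theorem \ref{th_SKT}.

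I expect the exclusion of $\mathfrak{k}_{23}^0$ to be the main obstacle. Unlike the non-split situation of Theorem \ref{th_GenKahler}, the holomorphic Poisson bivector $[J_+,J_-]g^{-1}$ now vanishes identically and provides no leverage; the argument must instead control, uniformly over all commuting integrable complex structures $J_-$, the interplay between integrability and the opposite-torsion condition, taking care that no $J_+$-invariant splitting beyond the coordinate-aligned ones escapes the analysis.
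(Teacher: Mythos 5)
Your proposal is correct and follows the same skeleton as the paper's proof (reduce to the eight algebras of Theorem~\ref{th_SKT}(2), exhibit a split structure on seven of them, exclude $\mathfrak{k}_{23}^0$), but both main steps are carried out by different means. For the constructions, the paper simply invokes \cite[Proposition 4.7]{AL} and writes down the pair \eqref{GK_example}; your $J_-$ (equal to $J_+$ on $\mathfrak{h}_1$ and $-J_+$ on $\mathrm{span}\langle e_1,e_6\rangle$) is the negative of the paper's choice, and your leg-counting argument --- $v=0$ forces $d\omega_\pm\in\Lambda^2\mathfrak{h}_1^*\wedge f^6$ and $df^{16}=0$, so flipping the sign of $J$ on $\mathrm{span}\langle e_1,e_6\rangle$ reverses exactly one leg of each monomial of $H$ --- is a clean self-contained substitute. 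For the exclusion of $\mathfrak{k}_{23}^0$ the paper instead reuses the brute-force Nijenhuis/torsion computation of Theorem~\ref{th_GenKahler}, observing that it never used $[J_+,J_-]\neq 0$; you propose the more structural route of parametrizing commuting Hermitian partners by $J_+$-invariant $g$-orthogonal splittings and cutting down via $[A,J_-\rvert_{\mathfrak{h}_1}]=0$. That route does close: since $A$ has kernel $\mathrm{span}\langle e_2,e_5\rangle$ and an invertible rotation block on $\mathrm{span}\langle e_3,e_4\rangle$, integrability forces $J_-=\epsilon_0 J_+\oplus\epsilon_1 J_+\oplus\epsilon_2 J_+$ on $\mathrm{span}\langle e_1,e_6\rangle\oplus\mathrm{span}\langle e_2,e_5\rangle\oplus\mathrm{span}\langle e_3,e_4\rangle$, and each monomial $e^{1j6}$ of $H_+$ has two legs in the first plane and one in another, so every $\epsilon_i$ enters $H_-$ an even number of times and $H_-=H_+\neq -H_+$. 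So your dichotomy ``non-integrable or $H_-=H_+$'' is genuinely correct, and arguably cleaner for the split case than the paper's computation, though you leave it as a sketch. Two small points to tidy up: the statement covers all almost abelian groups, so the nilpotent algebra $3\R\oplus\mathfrak{h}_3$ should be dismissed explicitly (via \cite{Cav}, as in Theorem~\ref{th_GenKahler}); and your appeal to Theorem~\ref{th_GenKahler} to say every generalized K\"ahler structure ``is automatically split'' is unnecessary here, since the theorem only concerns split structures to begin with.
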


\begin{proof}
A necessary condition to admit a generalized K\"ahler structure is the existence of an SKT structure. Since we want $\mathfrak{g}$ to admit no K\"ahler structures, $\mathfrak{g}$ is isomorphic to one of the eight Lie algebras of part  (2)  of Theorem \ref{th_SKT}. 

Moreover,  considering the explicit  SKT structures  that we found  for seven of these Lie algebras (all but $\mathfrak{k}_{23}^0$), the splitting $\mathfrak{g}=J\mathfrak{k} \oplus \mathfrak{h}_1 \oplus \mathfrak{k}$ is such that $J \mathfrak{k}$ is $\operatorname{ad} \mathfrak{k}$-invariant (that is, $v=0$). We may then conclude by \cite[Proposition 4.7]{AL} that a split generalized K\"ahler structure $(J_+,J_-,g)$ on each of those algebras is given by
\begin{equation} \label{GK_example}
J_+=\left( \begin{smallmatrix} 0&0&0&0&0&-1 \\ 0&0&-1&0&0&0 \\ 0&1&0&0&0&0 \\ 0&0&0&0&-1&0 \\ 0&0&0&1&0&0 \\ 1&0&0&0&0&0 \end{smallmatrix} \right), \quad J_-=\left( \begin{smallmatrix} 0&0&0&0&0&-1 \\ 0&0&1&0&0&0 \\ 0&-1&0&0&0&0 \\ 0&0&0&0&1&0 \\ 0&0&0&-1&0&0 \\ 1&0&0&0&0&0 \end{smallmatrix} \right),\quad g=\sum_{i=1}^6 f^i \otimes f^i.
\end{equation}
The corresponding torsion $3$-form $H=d^c_+\omega_+$ is given by:
\begin{align*}
H&=f^{123}, && \hspace{-2em} \text{for $\mathfrak{k}_{17}^{-\frac{1}{2}}$}, \\
H&=p\,f^{123}, && \hspace{-2em} \text{for $\mathfrak{k}_{19}^{p,-\frac{p}{2}}$, $\mathfrak{k}_{8}^{p,-\frac{p}{2},0}$, $\mathfrak{k}_{11}^{p,-\frac{p}{2},0,s}$}, \\
H&=f^{123} + f^{145}, && \hspace{-2em} \text{for $\mathfrak{k}_1^{-\frac{1}{2},-\frac{1}{2}}$},\\
H&=p\,f^{123}+p\,f^{145},  && \hspace{-2em} \text{for $\mathfrak{k}_{8}^{p,-\frac{p}{2},-\frac{p}{2}}$, $\mathfrak{k}_{11}^{p,-\frac{p}{2},-\frac{p}{2},s}$}.
\end{align*}

The remaining algebra $\mathfrak{k}_{23}^0$ can be discussed in the same way as in Theorem \ref{th_GenKahler}, noticing that its proof actually never assumes $[J_+,J_-]$ to be non-vanishing.
\end{proof} 

\begin{remark}
The existence of a split generalized K\"ahler structure on the unimodular algebra $\mathfrak{k}_{8}^{p,-\frac{p}{2},0}$ was first determined in \cite{FT}. Theorem \ref{th_SGK} thus provides new examples of  solvable  Lie  algebras which admit generalized K\"ahler structures but no K\"ahler structures.
\end{remark}

\begin{example}
In \cite{FT}, a non-K\"ahler compact quotient by a lattice was explicitly determined for the Lie group with Lie algebra $\mathfrak{k}_{8}^{-\frac{1}{\pi},\frac{1}{2 \pi},0}$: the resulting compact solvmanifold is the total space of a $2$-torus bundle over an Inoue surface.

By \cite{CM}, for some choices for $p$ and $s$ the Lie group with Lie algebra $\mathfrak{k}_{11}^{p,-\frac{p}{2},0,s}$ admits compact quotients by lattices. The resulting compact solvmanifolds $\Gamma \backslash G$ are non-K\"ahler, since $b_1(\Gamma \backslash G)=3$.

The groups corresponding to the Lie algebras $\mathfrak{k}_{19}^{p,-\frac{p}{2}}$ admit compact quotients, corresponding to products of  an Inoue surface and a $2$-torus \cite[Section 5]{Has1}.

The Lie group corresponding to the decomposable Lie algebra $\mathfrak{k}_{17}^{-\frac{1}{2}}$ cannot admit compact quotients by lattices. In fact (see \cite{Bock}) the associated Lie group has a lattice if and only if there exists a real number  $t_0 \neq 0$ such that the matrix associated with $\text{exp}(t_0\text{ad}_{f_6})$ with respect to the fixed basis $\{f_1,\ldots,f_6\}$ is conjugate to an integer matrix. If this were the case, the characteristic polynomial of $\text{exp}(t_0\text{ad}_{f_6})$, which is of the form $P(x)=(x-1)^3Q(x)$, would be such that $Q(x)$ is an integer polynomial $Q(x)=x^3-kx^2+lx^2-1$ with roots $e^{t_0}$, $e^{-\frac{t_0}{2}}$, $e^{-\frac{t_0}{2}}$. By \cite[Lemma 2.2]{Has2}, this implies $e^{-\frac{t_0}{2}}=1$, i.e., $t_0=0$, a contradiction.

In this way we  determine the six-dimensional non-K\"ahler almost abelian compact solvmanifolds admitting invariant generalized K\"ahler structures.
\end{example}

Recall that, by \cite[Theorem 3.1]{CG}, any left-invariant generalized complex structure on a nilmanifold must have holomorphically trivial canonical bundle. This was exploited in \cite{Cav} to prove that the only compact nilmanifolds admitting generalized K\"ahler structures are tori. Thus, it is natural to check if similar results about the canonical bundles hold in the almost abelian case.

Let $G$ be a six-dimensional almost abelian Lie group equipped with a left-invariant generalized K\"ahler structure $(\mathcal{J}_1,\mathcal{J}_2)$. By left-invariance, the canonical bundles of $\mathcal{J}_1$ and $\mathcal{J}_2$ can be identified with complex lines inside the complexified exterior algebra $\Lambda \mathfrak{g}^* \otimes \C$.

Fixing the twist given by $H=d^c_+\omega_+$ on $\mathfrak{g} \oplus \mathfrak{g}^*$, by \cite{Gua1} the $i$-eigenspaces of the generalized complex structures $\mathcal{J}_1$ and $\mathcal{J}_2$ are given respectively by
\begin{align*}
L_1 &= l_+ \oplus l_- = e^{-i\omega_+}\mathfrak{g}^{1,0}_+ \oplus e^{i\omega_-}\mathfrak{g}^{1,0}_- \subset \left(\mathfrak{g} \oplus \mathfrak{g}^*\right) \otimes \mathbb{C},\\
L_2 &= l_+ \oplus \overline{l_-} =e^{-i\omega_+}\mathfrak{g}^{1,0}_+ \oplus e^{-i\omega_-}\mathfrak{g}^{0,1}_- \subset \left(\mathfrak{g} \oplus \mathfrak{g}^*\right) \otimes \mathbb{C},
\end{align*}
where
\begin{align*}
\mathfrak{g}^{1,0}_\pm = \{ X \in \mathfrak{g} \otimes \mathbb{C},\,J_{\pm}X=iX \}, \quad
\mathfrak{g}^{0,1}_\pm = \{ X \in \mathfrak{g} \otimes \mathbb{C},\,J_{\pm}X=-iX \}
\end{align*} 
and $\omega_\pm(\cdot,\cdot)=g(J_\pm \cdot,\cdot)$ are the fundamental forms associated with $(J_\pm,g)$.

Then,
\[
U_{L_1} = U_{l_+} \cap U_{l_-},\quad U_{L_2} = U_{l_+} \cap \overline{U_{l_-}}
\]
where, by \cite[formula (2.13)]{Gua}, one has
\[
U_{l_+} = e^{i\omega_+} \Lambda (\mathfrak{g}^{0,1}_+ )^*, \quad
U_{l_-} = e^{-i\omega_-} \Lambda (\mathfrak{g}^{0,1}_- )^*,
\]
where
\[
e^B \varphi = \sum_{k=0}^{\infty} \tfrac{1}{k!} B^k \wedge \varphi= \varphi + B \wedge \varphi + \tfrac{1}{2} B \wedge B \wedge \varphi + \ldots
\]

For all the groups of Theorem \ref{th_SKT}, the split generalized K\"ahler structure in \eqref{GK_example} determines
\[
\omega_\pm = f^{16} \pm f^{23} \pm f^{45},\quad (\mathfrak{g}_\pm^{0,1})^*=\text{span}\left<f^1-if^6,f^2 \mp if^3,f^4 \mp i f^5\right>,
\]
so that the canonical bundles $U_{L_1}$ and $U_{L_2}$ are generated by the left-invariant complex differential forms
\begin{align*}
\rho_1&=e^{i\omega_+}(f^1 - if^6)=f^1-if^6 +i\omega_+ \wedge (f^1-if^6) -\tfrac{1}{2}\omega_+ \wedge \omega_+ \wedge (f^1 -if^6),\\
\rho_2&=e^{i\omega_+}(f^2-if^3)\wedge(f^4-if^5) = (f^2-if^3)\wedge(f^4-if^5) + i\omega_+ \wedge(f^2-if^3)\wedge(f^4-if^5),
\end{align*}
respectively, as shown by a direct computation. Recall that $U_{L_i}$ is holomorphically trivial if its generator $\rho_i$ is closed with respect to the twisted exterior differential $d-H \wedge$ determined by the splitting $H=d^c_+ \omega_+$. A simple computation shows that this is never the case in our examples.

\section{Generalized K\"ahler flow} \label{sec_flow}

In \cite{ST, ST2} J. Streets and G. Tian introduced a geometric flow for Hermitian metrics on a complex manifold $M$, preserving the SKT condition and generalizing the K\"ahler-Ricci flow. This flow, which takes the name of \emph{pluriclosed flow}, is expressed through the fundamental forms of the flowing metrics as
\[
\dot{\omega}=-(\rho^B_\omega)^{1,1},\quad \omega(0)=\omega_0,
\]
where $(\rho^B_\omega)^{1,1}$ denotes the $(1,1)$-part of the \emph{Bismut Ricci form} associated with $\omega$, having local expression
\[
\rho^B_\omega(X,Y) = -\frac{1}{2} \sum_{k=1}^{2n} g(R^B(X,Y)e_k,Je_k),
\]
for any local orthonormal frame $\{e_1,\ldots,e_{2n}\}$, $2n=\dim M$, where $R^B$ denotes the curvature of the Bismut connection $\nabla^B$.

Up to time dependent diffeomorphisms, that is, up to a change of \emph{gauge} (see \cite{ST2} for further details), the pluriclosed flow starting from an SKT metric is equivalent to the paired flow for a Riemannian metric and a closed $3$-form (preserving the cohomology class of the latter) defined by
\begin{equation} \label{genricciflow}
\begin{cases}
\dot{g}=-2\,\text{Ric}_g + \frac{1}{2} H \upperset{g}{\circ} H, & g(0)=g_0=\omega_0(\cdot,J\cdot),\\
\dot{H}=-\Delta_g H, & H(0)=H_0=d^c \omega_0,
\end{cases}
\end{equation}
where $\text{Ric}_g$ is the Ricci tensor associated with $g$, $H \upperset{g}{\circ} H$ is given by  \[H \upperset{g}{\circ} H(X,Y)=g(\iota_XH,\iota_YH)\] and $\Delta_g=dd^*_g +d^*_gd$ is the Hodge Laplacian associated with the metric $g$ and the fixed orientation. These equations correspond to the \emph{$B$-field renormalization group flow} of Type II string theory and were recently generalized by Streets \cite{Str0} and Garcia-Fernandez \cite{Gar} to define the \emph{generalized Ricci flow} on Courant algebroids: for example, a solution $(g(t),H(t)=H_0+dB(t))$ to \eqref{genricciflow} can be interpreted in this context as a family of generalized Riemannian metrics
\[
\mathcal{G}(t)=e^{B(t)}\begin{pmatrix} 0 & g(t)^{-1} \\ g(t) & 0 \end{pmatrix} e^{-B(t)}
\]
on the generalized tangent bundle $\mathbb{T}M$ equipped with the $H_0$-twisted Courant  bracket.

Given a split generalized K\"ahler structure $(J_+,J_-,g)$, the pluriclosed flow starting from the SKT structure $(J_+,g)$ produces a family of SKT metrics with respect to both $J_+$ and $J_-$, preserving the generalized K\"ahler condition $d^c_+ \omega_+ + d^c_- \omega_- =0$, so that one may say that the given split generalized K\"ahler structure evolves by $(J_+,J_-,g(t))$. This flow is also called \emph{generalized K\"ahler flow}  (\cite{Str}).

When one works on Lie groups, left-invariant initial conditions yield left-invariant solutions, so that the pluriclosed flow and the generalized K\"ahler flow reduce to systems of \textsc{ode}s on the associated Lie algebra.

We recall  that a SKT structure $(J,g)$ on a real Lie algebra  $\mathfrak{g}$  is a \emph{pluriclosed soliton} if the pluriclosed flow starting from $(J,g)$ evolves simply by rescaling and time-dependent biholomorphisms, namely $g(t)=c(t)\varphi_t^*g$, with $c(t) \in \R$ and $\varphi_t$ biholomorphisms. More precisely, we say that $(J,g)$ is a \emph{shrinking}, \emph{expanding} or \emph{steady} soliton on $\mathfrak{g}$  if $c= c(t)$ is respectively decreasing, increasing or constantly equal to $1$.

Analogously, we say that a split generalized K\"ahler structure $(J_+,J_-,g)$ on $\mathfrak{g}$ is a \emph{soliton} for the generalized K\"ahler flow if $(J_+,J_-,g(t))=(J_+,J_-,c(t)\varphi_t^*g)$.

We now briefly review the \emph{bracket flow} technique applied to the case of the pluriclosed flow, as treated in \cite{EFV1, AL}, to which we refer the reader for further details.

Given a Lie algebra $\mathfrak{g}$, view it as a pair $(\mathfrak{g},\mu_0)$, where $\mathfrak{g}$ denotes the underlying vector space and $\mu_0 \in V(\mathfrak{g}) = \Lambda^2 \mathfrak{g}^* \otimes \mathfrak{g}$ denotes the Lie bracket. Fix then a complex structure $J$ on $(\mathfrak{g},\mu_0)$. The Lie group $\text{GL}(\mathfrak{g},J)$ of automorphisms of $\mathfrak{g}$ preserving $J$ acts transitively on the set of Hermitian metrics with respect to $J$ via pullback, so that, if $g_0$ is a Hermitian metric on $(\mathfrak{g},\mu_0,J)$, the pluriclosed flow starting from $(J,g_0)$ yields a family $(J,h(t)^*g_0)$, for some $h(t) \subset \text{GL}(\mathfrak{g},J)$. One then observes that
\[
h(t) \colon (\mathfrak{g},\mu_0,J,h(t)^*g_0) \to (\mathfrak{g},h(t) \cdot \mu_0,J,g_0)
\]
is an isomorphism of Hermitian structures, namely $h(t)$ is a Lie algebra isomorphism which is orthogonal and biholomorphic. Here we denoted
\[
h \cdot \mu = (h^{-1})^*\mu=h\mu(h^{-1} \cdot, h^{-1} \cdot).
\]
Let $\mu(t)=h(t) \cdot \mu_0$. Then,  up to time-dependent biholomorphisms, the pluriclosed flow of a Hermitian structure  $(J,g_0)$ on  $(\mathfrak{g}, \mu_0)$ can be interpreted as a flow $\mu(t)$ on $V(\mathfrak{g})$, such that $\mu(t) \in \text{GL}(\mathfrak{g},J) \cdot \mu_0$ for all $t$. Denote by $\rho^B_{\omega_0,\mu}$ the Bismut Ricci form associated with the left-invariant extension of $\omega_0$ on  the unique simply connected Lie group $G_{\mu}$ having Lie algebra $(\mathfrak{g},\mu)$, and by $\rho_\mu^B$ its restriction to $\mathfrak{g}$, i.e.,
\[
\rho^B_{\mu} \coloneqq \rho^B_{\omega_0,\mu} \rvert_e \in \Lambda^2 T^*_e G_{\mu} \cong \Lambda^2 \mathfrak{g}^*.
\] 
The  evolution of
$\mu(t)$ is given by the so-called bracket flow
\begin{equation} \label{brflow}
\dot{\mu}= - \pi(P_\mu)\mu,\quad \mu(0)=\mu_0,
\end{equation}
where
\[
P_\mu = \tfrac{1}{2} \omega_0^{-1} (\rho_\mu^B)^{1,1} \in \mathfrak{gl}(\mathfrak{g}), \quad \omega_0(\cdot,\cdot)=g_0(J \cdot, \cdot),
\]
and 
\[
(\pi(A)\mu)(X,Y)=A\mu(X,Y) - \mu(AX,Y) - \mu(X,AY),
\]
for any $A \in \mathfrak{gl}(\mathfrak{g})$, $\mu \in V(\mathfrak{g})$, $X,Y \in \mathfrak{g}$. 
Applying a \emph{gauge} to the bracket flow \eqref{brflow}, namely considering a flow of the form
\begin{equation} \label{gauged_brflow}
\dot{\bar{\mu}}=\pi(P_{\bar{\mu}} - U_{\bar{\mu}})\bar{\mu}, \quad \bar{\mu}(0)=\mu_0,
\end{equation}
for some smooth map $U \colon V(\mathfrak{g}) \to \mathfrak{u}(\mathfrak{g},J)$, then, by \cite[Theorem 2.2]{AL}, for any $\mu_0 \in V(\mathfrak{g})$, there exist $k(t) \subset \text{U}(\mathfrak{g},J)$ such that $\bar{\mu}(t) = k(t) \cdot \mu(t)=k(t)h(t) \cdot \mu_0$ for all $t$, where $\mu(t)$ and $\bar{\mu}(t)$ respectively denote the solutions to \eqref{brflow} and \eqref{gauged_brflow}.

This implies that, given an SKT Lie algebra $(\mathfrak{g},\mu_0,J,g_0)$, assuming there exists a gauged bracket flow such that $\mu_0$ evolves only by rescaling, $\bar{\mu}(t)=c(t) \mu_0$, $c(t) \in \R$, then $(J,g_0)$ is a pluriclosed soliton on $\mathfrak{g}$. The converse holds as well.

It is now natural to study the behaviour of the  split generalized K\"ahler   structures on the Lie algebras in Theorem \ref{th_SGK} under the generalized K\"ahler flow.

To do this, we first recall the setup for the pluriclosed flow of left-invariant SKT structures on almost abelian Lie groups \cite{AL}, in terms of the bracket flow.
Let $G$ be a $2n$-dimensional almost abelian Lie group with Lie algebra $(\mathfrak{g},\mu)$. As we have reviewed in Section \ref{sec_SKT}, given an SKT structure $(J,g)$ on it, there exists a $g$-orthonormal basis $\{e_1,\ldots,e_{2n}\}$ of $\mathfrak{g}$ such that $\mathfrak{h}=\text{span}\left<e_1,\ldots,e_{2n-1}\right>$ and the matrix $B$ associated with $\text{ad}_{e_{2n}}\rvert_{\mathfrak{h}}$ is of the form \eqref{matrix_Hermitian}.
In general, the bracket flow \eqref{brflow} will not preserve this form. In order to adjust this, in \cite{AL} the authors introduced a gauged bracket flow of the form \eqref{gauged_brflow}, which instead preserves the nilradical $\mathfrak{h}$, so that the pluriclosed flow is equivalent to a system of \textsc{ode}s for the triple $(a,v,A) \in \R \times \R^{2n-2} \times \R^{2n-2,2n-2}$, namely
\[
\begin{cases}
\dot{a}=ca, & a(0)=a_0,\\
\dot{v}=cv+Sv-\tfrac{1}{2} \lVert v \rVert^2v, & v(0)=v_0,\\
\dot{A}=cA, & A(0)=A_0,
\end{cases}
\]
where $c=-\left( \tfrac{k}{4} + \tfrac{1}{2} \right) a^2 - \tfrac{1}{2} \lVert v \rVert^2$, $2k=\text{rk}(A+A^t)$ and
\[
S=-\left( \tfrac{k}{4} + \tfrac{1}{2} \right)a^2 \text{Id} - \tfrac{1}{2} AA^t + \tfrac{a}{4} \left(A + A^t\right).
\]
Notice that the  previous expression differs from the one in \cite{AL} by a sign inside the parenthesis in the first summand, which followed from a wrong formula in \cite[Proposition 3.1]{Vez} (\cite{VezPr}).
In particular,  for $v_0=0$, one has $v(t)=0$ for all $t$, and the system for the pair $(a,A)$ reduces to
\[
\begin{cases}
\dot{a}=-\left( \tfrac{k}{4} + \tfrac{1}{2} \right) a^3, & a(0)=a_0, \\
\dot{A}= -\left( \tfrac{k}{4} + \tfrac{1}{2} \right) a^2 A, & A(0)=A_0,
\end{cases}
\]
which has explicit solution
\[
(a(t),A(t))=(a_0,A_0) \cdot c(t),\quad c(t)=\frac{1}{ \sqrt{1+a_0^2\left( \tfrac{k}{2}+1\right)t}}.
\]

We then deduce that the examples of split generalized K\"ahler structures of Theorem \ref{th_SGK} are all expanding solitons with scaling factor $c(t)$.  By \cite[Theorem 4.14]{AL}, any other split generalized K\"ahler structure on these groups converges, in the Cheeger-Gromov sense and after a suitable normalization, to an expanding soliton.

\section{Appendix:  Six-dimensional almost abelian Lie algebras}
Here we provide the classification of six-dimensional non-nilpotent almost abelian Lie algebras. Table \ref{table-indecomp} features the indecomposable ones, whose classification was obtained in \cite{Mu3} and refined in \cite{Sha}. In Table \ref{table-decomp} one can find six-dimensional non-nilpotent almost abelian Lie algebras which can be decomposed as a direct sum of two or more Lie algebras: these were singled out by studying \cite{Mu,Mu2}.
For each Lie algebra  in Tables \ref{table-indecomp} and \ref{table-decomp}    we include the  conditions on the parameters (if any)  for which the algebra   is unimodular.

In Table  \ref{table-cpx}  we give an explicit  complex structure for every Lie algebra in Theorem \ref{CPX} (the  conditions on the parameters  involved in  the structure equations are given in Theorem \ref{CPX}). \newpage

\begin{table}[H]
\begin{center}
\addtolength{\leftskip} {-2cm}
    \addtolength{\rightskip}{-2cm}
\scalebox{0.75}{
\begin{tabular}{|l|l|l|l|l|}
\hline \xrowht{15pt}
Name& Structure equations & Conditions & Unimodular & Complex structure \\
\hline \hline \xrowht{20pt}
$\mathfrak{g}_{6.1}^{p,q,r,s}$ & $(f^{16},pf^{26},qf^{36},rf^{46},sf^{56},0)$ &  $1 \geq |p| \geq |q| \geq |r| \geq |s| > 0$ & $s=-1-p-q-r$ & \begin{tabular}{@{}l@{}} ($p=q$, $r=s$) or ($p=1$, $r=s$) \\ or ($p=1$, $q=r$) \end{tabular} \\
\hline \xrowht{20pt}
$\mathfrak{g}_{6.2}^{p,q,r}$ & $(f^{16},pf^{26}+f^{36},pf^{36},qf^{46},rf^{56},0)$ & $1 \geq |q| \geq |r| > 0$ & $r=-1-2p-q$ & $(p=1,\,q=r)$ or $(q=1,\,p=r)$ \\
\hline \xrowht{20pt}
$\mathfrak{g}_{6.3}^{p,q}$ & $(f^{16},pf^{26}+f^{36},pf^{36}+f^{46},pf^{46},qf^{56},0)$ & $1 \geq |q| > 0$ & $q=-1-3p$ & $-$ \\
\hline \xrowht{20pt}
$\mathfrak{g}_{6.4}^{p}$ & $(f^{16},pf^{26}+f^{36},pf^{36}+f^{46},pf^{46}+f^{56},pf^{56},0)$ &  & $p=-\frac{1}{4}$ & $-$ \\
\hline \xrowht{20pt}
$\mathfrak{g}_{6.5}$ & $(f^{16}+f^{26},f^{26}+f^{36},f^{36}+f^{46},f^{46}+f^{56},f^{56},0)$ &  & $-$ & $-$ \\
\hline \xrowht{20pt}
$\mathfrak{g}_{6.6}^{p,q}$ & $(f^{16},pf^{26}+f^{36},pf^{36},qf^{46}+f^{56},qf^{56},0)$ & $|p| \geq |q|$ & $q=-\frac{1}{2}-p$ & $p=q$ \\
\hline \xrowht{20pt}
$\mathfrak{g}_{6.7}^{p,q}$ & $(pf^{16}+f^{26},pf^{26}+f^{36},pf^{36},qf^{46}+f^{56},qf^{56},0)$ & $p^2+q^2\neq 0$ & $q=-\frac{3}{2}p$ & $p=q$ \\
\hline \xrowht{20pt}
$\mathfrak{g}_{6.8}^{p,q,r,s}$ & $(pf^{16},qf^{26},rf^{36},sf^{46}+f^{56},-f^{46}+sf^{56},0)$ & $|p| \geq |q| \geq |r| >0$ & $s=-\frac{1}{2}(p+q+r)$ & $p=q$ or $q=r$ \\
\hline \xrowht{20pt}
$\mathfrak{g}_{6.9}^{p,q,r}$ & $(pf^{16},qf^{26}+f^{36},qf^{36},rf^{46}+f^{56},-f^{46}+rf^{56},0)$ & $p \neq 0$ & $r=-\frac{1}{2}p-q$ & $p=q$ \\
\hline \xrowht{20pt}
$\mathfrak{g}_{6.10}^{p,q}$ & $(pf^{16}+f^{26},pf^{26}+f^{36},pf^{36},qf^{46}+f^{56},-f^{46}+qf^{56},0)$ &  & $q=-\frac{3}{2}p$ &$-$ \\
\hline \xrowht{20pt}
$\mathfrak{g}_{6.11}^{p,q,r,s}$ & $(pf^{16},qf^{26}+f^{36},-f^{26}+qf^{36},rf^{46}+sf^{56},-sf^{46}+rf^{56},0)$ & \begin{tabular}{@{}l@{}} 
$ps \neq 0$ \\
$|q| > |r| \text{ or } (|q|=|r|,$ $|s| \leq 1)$ \\
\end{tabular} & $r=-\frac{1}{2}p-q$ & \cmark \\
\hline \xrowht{20pt}
$\mathfrak{g}_{6.12}^{p,q}$ & $(pf^{16},qf^{26}+f^{36}-f^{46},-f^{26}+qf^{36}-f^{56},qf^{46}+f^{56},-f^{46}+qf^{56},0)$ & $p \neq 0$ & $q=-\frac{1}{4}p$ & \cmark \\ \hline
\end{tabular}}
\caption{Six-dimensional indecomposable non-nilpotent almost abelian Lie algebras.} \label{table-indecomp}
\end{center}
\end{table}

\begin{table}[H]
\begin{center}
\addtolength{\leftskip} {-2cm}
\addtolength{\rightskip}{-2cm}
\scalebox{0.75}{
\begin{tabular}{|l|l|l|l|l|}
\hline \xrowht{15pt}
Name& Structure equations & Conditions & Unimodular & Complex structure \\
\hline \hline  \xrowht{20pt}
$\mathfrak{g}_2 \oplus 4\R$ & $(f^{16},0,0,0,0,0)$ &  & $-$ & \cmark \\ \hline
\xrowht{20pt}
$\mathfrak{g}_{3.2} \oplus 3\R$ & $(f^{16}+f^{26},f^{26},0,0,0,0)$ &  & $-$ & $-$\\
\hline \xrowht{20pt}
$\mathfrak{g}_{3.3} \oplus 3\R$ & $(f^{16},f^{26},0,0,0,0)$  &  &$-$ & \cmark\\
\hline \xrowht{20pt}
$\mathfrak{g}_{3.4}^p \oplus 3\R$ & $(f^{16},pf^{26},0,0,0,0)$ & $1 \geq |p| > 0$, $p\neq 1$ & $p=-1$ & $-$ \\
\hline \xrowht{20pt}
$\mathfrak{g}_{3.5}^p \oplus 3\R$ & $(pf^{16}+f^{26},-f^{16}+pf^{26},0,0,0,0)$ &  & $p=0$ & \cmark \\
\hline
\xrowht{20pt}
$\mathfrak{g}_{4.2}^{p} \oplus 2\R$ & $(pf^{16},f^{26}+f^{36},f^{36},0,0,0)$ & $p \neq 0$ & $p=-2$ & $p=1$ \\
\hline \xrowht{20pt}
$\mathfrak{g}_{4.3} \oplus 2\R$ & $(f^{16},f^{36},0,0,0,0)$ &  & $-$ & $-$\\
\hline \xrowht{20pt}
$\mathfrak{g}_{4.4} \oplus 2\R$ & $(f^{16}+f^{26},f^{26}+f^{36},f^{36},0,0,0)$ &  &$-$ & $-$\\
\hline \xrowht{20pt}
$\mathfrak{g}_{4.5}^{p,q} \oplus 2\R$ & $(f^{16},pf^{26},qf^{36},0,0,0)$ & $1 \geq |p| \geq |q| > 0$ & $q=-1-p$ & $p=q$ or $p=1$\\
\hline \xrowht{20pt}
$\mathfrak{g}_{4.6}^{p,q} \oplus 2\R$ & $(pf^{16},qf^{26}+f^{36},-f^{26}+qf^{36},0,0,0)$ & $p \neq 0$ & $q=-\frac{p}{2}$ & \cmark \\
\hline
\xrowht{20pt}
$\mathfrak{g}_{5.7}^{p,q,r} \oplus \R$ & $(f^{16},pf^{26},qf^{36},rf^{46},0,0)$ & $1\geq|p|\geq |q| \geq |r| > 0$ & $r=-1-p-q$ & $p=1$, $q=r$ \\
\hline \xrowht{20pt}
$\mathfrak{g}_{5.8}^{p} \oplus \R$ & $(f^{16},pf^{26},f^{46},0,0,0)$ & $1\geq|p| > 0$ & $p=-1$ & $p=1$ \\
\hline \xrowht{20pt}
$\mathfrak{g}_{5.9}^{p,q} \oplus \R$ & $(pf^{16},qf^{26},f^{36}+f^{46},f^{46},0,0)$ & $|p|\geq|q| > 0$ & $q=-2-p$ & $-$\\
\hline \xrowht{20pt}
$\mathfrak{g}_{5.10} \oplus \R$ & $(f^{16},f^{36},f^{46},0,0,0)$ &  & $-$ & $-$ \\
\hline \xrowht{20pt}
$\mathfrak{g}_{5.11}^{p} \oplus \R$ & $(pf^{16},f^{26}+f^{36},f^{36}+f^{46},f^{46},0,0)$  & $p \neq 0$ & $p=-3$ & $-$\\
\hline \xrowht{20pt}
$\mathfrak{g}_{5.12} \oplus \R$ & $(f^{16}+f^{26},f^{26}+f^{36},f^{36}+f^{46},f^{46},0,0)$ &  & $-$ & $-$ \\
\hline \xrowht{20pt}
$\mathfrak{g}_{5.13}^{p,q,r} \oplus \R$ & $(f^{16},pf^{26},qf^{36}+rf^{46},-rf^{36}+qf^{46},0,0)$ & $1 \geq |p| > 0,$ $r \neq 0$ & $q=-\frac{1}{2}(1+p)$ & $p=1$ \\
\hline \xrowht{20pt}
$\mathfrak{g}_{5.14}^{p} \oplus \R$ & $(pf^{16}+f^{26},-f^{16}+pf^{26},f^{46},0,0,0)$ &  & $p=0$ & \cmark \\
\hline \xrowht{20pt}
$\mathfrak{g}_{5.15}^{p} \oplus \R$ & $(f^{16}+f^{26},f^{26},pf^{36}+f^{46},pf^{46},0,0)$ & $|p| \leq 1$ & $p=-1$ & $p=1$  \\
\hline \xrowht{20pt}
$\mathfrak{g}_{5.16}^{p,q} \oplus \R$ & $(f^{16}+f^{26},f^{26},pf^{36}+qf^{46},-qf^{36}+pf^{46},0,0)$ & $q \neq 0$ & $p=-1$ & $-$ \\
\hline \xrowht{20pt}
$\mathfrak{g}_{5.17}^{p,q,r} \oplus \R$ & $(pf^{16}+f^{26},-f^{16}+pf^{26},qf^{36}+rf^{46},-rf^{36}+qf^{46},0,0)$ & \begin{tabular}{@{}l@{}} 
$r \neq 0$ \\
$|p| > |q| \text{ or } (|p|=|q|,$ $|r| \leq 1)$  \\
\end{tabular} & $q=-p$ & \cmark \\ 
\hline \xrowht{20pt}
$\mathfrak{g}_{5.18}^{p} \oplus \R$ & $(pf^{16}+f^{26}-f^{36},-f^{16}+pf^{26}-f^{46},pf^{36}+f^{46},-f^{36}+pf^{46},0,0)$ &  & $p=0$ & \cmark \\
\hline
\end{tabular}}
\caption{Six-dimensional decomposable non-nilpotent almost abelian Lie algebras.}
 \label{table-decomp}
\end{center}
\end{table}

\begin{table}
\begin{center}
\addtolength{\leftskip} {-2cm}
\addtolength{\rightskip}{-2cm}
\scalebox{0.75}{
\begin{tabular}{|l|l|l|}
\hline \xrowht{15pt}
Name& Structure equations &Complex structure \\
\hline \hline
\xrowht{20pt}
$\mathfrak{k}_1^{p,r} =\mathfrak{g}_{6.1}^{p,p,r,r}$ & $(f^{16},pf^{26},pf^{36},rf^{46},rf^{56},0)$  & $J f_1 = f_6,\, J f_2 = f_3,\, Jf_4 = f_5$ \\ \hline
\xrowht{20pt}
$\mathfrak{k}_2^{q,r} =\mathfrak{g}_{6.1}^{1,q,r,r}$ & $(f^{16},f^{26},qf^{36},rf^{46},rf^{56},0)$  & $J f_1 = f_2,\, J f_3 = f_6,\, Jf_4 = f_5$ \\ \hline
\xrowht{20pt}
$\mathfrak{k}_{3}^{q,s} =\mathfrak{g}_{6.1}^{1,q,q,s}$ & $(f^{16},f^{26},qf^{36},qf^{46},sf^{56},0)$  & $J f_1 = f_2,\, J f_3 = f_4,\, Jf_5 = f_6$ \\ \hline
\xrowht{20pt}
$\mathfrak{k}_{4}^q=\mathfrak{g}_{6.2}^{1,q,q}$ & $(f^{16},f^{26}+f^{36},f^{36},qf^{46},qf^{56},0)$ & $Jf_2=f_1,\,Jf_3=f_6,\,Jf_4=f_5$ \\
\hline
\xrowht{20pt}
$\mathfrak{k}_{5}^p=\mathfrak{g}_{6.2}^{p,1,p}$ & $(f^{16},pf^{26}+f^{36},pf^{36},f^{46},pf^{56},0)$ & $Jf_1=f_4,\,Jf_2=f_5,\,Jf_3=f_6$ \\
\hline
\xrowht{20pt}
$\mathfrak{k}_{6}^{p}=\mathfrak{g}_{6.6}^{p,p}$ & $(f^{16},pf^{26}+f^{36},pf^{36},pf^{46}+f^{56},pf^{56},0)$  &$J f_1 = f_6, \, J f_2 = f_4, \, Jf_3 = f_5$ \\
\hline
\xrowht{20pt}
$\mathfrak{k}_{7}^p=\mathfrak{g}_{6.7}^{p,p}$ & $(pf^{16}+f^{26},pf^{26}+f^{36},pf^{36},pf^{46}+f^{56},pf^{56},0)$ & $J f_1 = f_4, \, J f_2 = f_5, \, Jf_3 = f_6$ \\ \hline
\xrowht{20pt}
$\mathfrak{k}_{8}^{p,q,s} =\mathfrak{g}_{6.8}^{p,q,q,s}$ & $(pf^{16},qf^{26},qf^{36},sf^{46}+f^{56},-f^{46}+sf^{56},0)$  & $J f_1 = f_6, \, J f_2 = f_3, \, Jf_4 = f_5$ \\
\hline
\xrowht{20pt}
$\mathfrak{k}_{9}^{p,r,s}=\mathfrak{g}_{6.8}^{p,p,r,s}$ & $(pf^{16},pf^{26},rf^{36},sf^{46}+f^{56},-f^{46}+sf^{56},0)$ & $J f_1 = f_2, \, J f_3 = f_6, \, Jf_4 = f_5$ \\ 
\hline
\xrowht{20pt}
$\mathfrak{k}_{10}^{p,r}=\mathfrak{g}_{6.9}^{p,p,r}$ & $(pf^{16},pf^{26}+f^{36},pf^{36},rf^{46}+f^{56},-f^{46}+rf^{56},0)$ & $Jf_2=f_1,\,Jf_3=f_6,\,Jf_4=f_5$ \\
\hline
\xrowht{20pt}
$\mathfrak{k}_{11}^{p,q,r,s}=\mathfrak{g}_{6.11}^{p,q,r,s}$ & $(pf^{16},qf^{26}+f^{36},-f^{26}+qf^{36},rf^{46}+sf^{56},-sf^{46}+rf^{56},0)$ & $J f_1 = f_6, \, J f_2 = f_3, \, Jf_4 = f_5$ \\
\hline
\xrowht{20pt}
$\mathfrak{k}_{12}^{p,q}=\mathfrak{g}_{6.12}^{p,q}$ & $(pf^{16},qf^{26}+f^{36}-f^{46},-f^{26}+qf^{36}-f^{56},qf^{46}+f^{56},-f^{46}+qf^{56},0)$  & $J f_1 = f_6, \, J f_2 = f_3, \, Jf_4 = f_5$ \\
\hline
\xrowht{20pt}
$\mathfrak{k}_{13}=\mathfrak{g}_2 \oplus 4\R$ & $(f^{16},0,0,0,0,0)$  & $J f_1 = f_6, \, J f_2 = f_3, \, Jf_4 = f_5$  \\
\hline \xrowht{20pt}
$\mathfrak{k}_{14}=\mathfrak{g}_{3.3} \oplus 3\R$ & $(f^{16},f^{26},0,0,0,0)$ &  $J f_1 = f_2, \, J f_3 = f_4, \, Jf_5 = f_6$ \\
\hline \xrowht{20pt}
$\mathfrak{k}_{15}^p=\mathfrak{g}_{3.5}^p \oplus 3\R$ & $(pf^{16}+f^{26},-f^{16}+pf^{26},0,0,0,0)$  & $J f_1 = f_2, \, J f_3 = f_4, \, Jf_5 = f_6$ \\
\hline
\xrowht{20pt}
$\mathfrak{k}_{16}=\mathfrak{g}_{4.2}^1 \oplus 2\R$ & $(f^{16},f^{26}+f^{36},f^{36},0,0,0)$ & $Jf_2=f_1,\,Jf_3=f_6,\,Jf_4=f_5$ \\ \hline
 \xrowht{20pt}
$\mathfrak{k}_{17}^p=\mathfrak{g}_{4.5}^{p,p} \oplus 2\R$ & $(f^{16},pf^{26},pf^{36},0,0,0)$   & $J f_1 = f_6,\, J f_2 = f_3,\, Jf_4 = f_5$ \\
\hline 
 \xrowht{20pt}
$\mathfrak{k}_{18}^q=\mathfrak{g}_{4.5}^{1,q} \oplus 2\R$ & $(f^{16},f^{26},qf^{36},0,0,0)$   & $J f_1 = f_2,\, J f_3 = f_6,\, Jf_4 = f_5$ \\
\hline
\xrowht{20pt}
$\mathfrak{k}_{19}^{p,q}=\mathfrak{g}_{4.6}^{p,q} \oplus 2\R$ & $(pf^{16},qf^{26}+f^{36},-f^{26}+qf^{36},0,0,0)$  & $J f_1 = f_6, \, J f_2 = f_3, \, Jf_4 = f_5$ \\
\hline
\xrowht{20pt}
$\mathfrak{k}_{20}^{q}=\mathfrak{g}_{5.7}^{1,q,q} \oplus \R$ & $(f^{16},f^{26},qf^{36},qf^{46},0,0)$  & $J f_1 = f_2, \, J f_3 = f_4, \, Jf_5 = f_6$ \\
\hline \xrowht{20pt}
$\mathfrak{k}_{21}=\mathfrak{g}_{5.8}^1 \oplus \R$ & $(f^{16},f^{26},f^{46},0,0,0)$ &  $J f_1 = f_2, \, J f_3 = f_5, \, Jf_4 = f_6$ \\
\hline \xrowht{20pt}
$\mathfrak{k}_{22}^{q,r}=\mathfrak{g}_{5.13}^{1,q,r} \oplus \R$ & $(f^{16},f^{26},qf^{36}+rf^{46},-rf^{36}+qf^{46},0,0)$  & $J f_1 = f_2, \, J f_3 = f_4, \, Jf_5 = f_6$ \\
\hline \xrowht{20pt}
$\mathfrak{k}_{23}^p=\mathfrak{g}_{5.14}^p \oplus \R$ & $(pf^{16}+f^{26},-f^{16}+pf^{26},f^{46},0,0,0)$ &$J f_1 = f_2, \, J f_3 = f_5, \, Jf_4 = f_6$ \\ \hline
\xrowht{20pt}
$\mathfrak{k}_{24}=\mathfrak{g}_{5.15}^1 \oplus \R$ & $(f^{16}+f^{26},f^{26},f^{36}+f^{46},f^{46},0,0)$ & $J f_1 = f_3, \, J f_2 = f_4, \, Jf_5 = f_6$ \\
\hline \xrowht{20pt}
$\mathfrak{k}_{25}^{p,q,r}=\mathfrak{g}_{5.17}^{p,q,r} \oplus \R$ & $(pf^{16}+f^{26},-f^{16}+pf^{26},qf^{36}+rf^{46},-rf^{36}+qf^{46},0,0)$  & $J f_1 = f_2, \, J f_3 = f_4, \, Jf_5 = f_6$ \\
\hline \xrowht{20pt}
$\mathfrak{k}_{26}^p=\mathfrak{g}_{5.18}^p \oplus \R$ & $(pf^{16}+f^{26}-f^{36},-f^{16}+pf^{26}-f^{46},pf^{36}+f^{46},-f^{36}+pf^{46},0,0)$  & $J f_1 = f_2, \, J f_3 = f_4, \, Jf_5 = f_6$  \\ \hline
\end{tabular}}
\medskip \medskip
\caption{Six-dimensional non-nilpotent almost abelian Lie algebras admitting a complex structure.} \label{table-cpx}
\end{center}
\end{table}

\clearpage

\end{document}